\newtheorem{theorem}{Theorem}[section]
\newtheorem{lemma}[theorem]{Lemma}
\newtheorem{proposition}[theorem]{Proposition}
\newtheorem{corollary}[theorem]{Corollary}
\newtheorem{example}[theorem]{Example}
\def\en{\mathbb N}
\def\er{\mathbb R}
\def\zet{\mathbb Z}
\def\K{\mathcal K}
\def\H{\mathcal H}
\def\R{\mathcal R}
\newcommand{\graph}{\operatorname{graph}}
\newcommand{\card}{\operatorname{card}}
\newcommand{\co}{\operatorname{co}}
\newcommand{\diam}{\operatorname{diam}}
\newcommand{\dist}{\operatorname{dist}}
\newcommand{\ep}{\varepsilon}
\newcommand{\tn}{\operatorname{Tan}}
\newcommand{\id}{\operatorname{Id}}
\newcommand{\aff}{\operatorname{Aff}}
\newcommand{\eps}{\varepsilon}
\def\eqn#1$$#2$${\begin{equation}\label#1#2\end{equation}}
\renewcommand{\labelenumi}{(\alph{enumi})}
\begin{document}

\title{On critical values of self-similar sets}
\author{Du\v san Pokorn\'y}\thanks{The author was supported by the Czech Science Foundation, project No. 201/10/J039}

\begin{abstract}
In this paper we study properties of the set of critical points for self-similar sets. We introduce simple condition that implies at most countably many critical values and we construct a self-similar set with uncountable set of critical values.
\end{abstract}
\maketitle
\section{Introduction and motivation}
Let $\phi_{1},...,\phi_{k}$ be contracting similarities on $\er^{n},$ i.e. $\phi_{i}(x)=r_{i}A_{i}x+v_{i},$ where $v_{i}\in\er^{n},$ $r_{i}\in(0,1)$ and $A_{i}$ is isometry for every $i=1,...,k$.
It is well known that by Banach fixed point theorem there is a unique compact set $K$ satisfying
$$
K=\bigcup_{i=1}^{\infty} \phi_{i}(F).
$$
Sets of such form are called self-similar.
We say that $K$ satisfies the open set condition (OSC) if there is an open set $O$ such that for every $i,j=1,..,k$ we have $f_{i}(O)\subset O$ and $f_{i}(O)\cap f_{j}(O)=\emptyset$ provided $i\not=j$.
If $A\subset \er^{n}$ is a closed set we define $N(x)=\{a\in A: \dist(x,A)=|x-a|\}$ and say that $x$ is a critical point if $x\in\co(N(x))$, $\dist(x,A)$ is then called a critical value (of $A$).
The notion of critical value of a set is often used in the theory of curvature measures.

In \cite{FR} S.~Ferry proved that in dimensions $2$ and $3$ the set of critical values has always zero Lebesgue measure 
and also constructed a set in $\er^{2}$ whose set of critical values is uncountable and a set in $\er^{4}$ whose set of critical values contains an interval. 
Later, in \cite{FU} J.H.G.~Fu gave some better estimates on the size of the set of critical values in the terms of Hausdorff dimension.

Recently, fractal counterpart of curvature measures has been introduced and studied for self-similar sets (see \cite{W},\cite{WZ}).
The way how to define such objects is the following: In the first step we approximate a self-similar set $K$ by its parallel set 
$K_{\varepsilon}=\{x:\dist(x,K)\leq\varepsilon\}$ for some $\varepsilon>0$.
If $\overline{(K_{\varepsilon})^{c}}$, the closure of complement of $K_{\varepsilon}$, has positive reach, which is true for example when $\varepsilon$ is not a critical value of $K$,
we can define (for $\overline{(K_{\varepsilon})^{c}}$) curvature measures in the sense of Federer (see \cite{F}).
In the next step, if the curvature measures of $\overline{(K_{\varepsilon})^{c}}$ exist for almost every $\varepsilon$, the fractal curvature measure of $K$ is produced by a limiting procedure with suitable scaling.

It is conjectured, that for a self-similar set satisfying open set condition the set of critical values has Lebesgue measure 0. 
So far it was even unknown whether the set of critical points is not actually always countable.
In this paper, we introduce a simple condition that implies at most countably many critical values and construct a self-similar set with uncountable set of critical values.

\section{Notation}

We will use $B(z,r)$ for closed ball in $\er^{d}$ with center $z$ and radius $r$. 

The system of all compact sets in $\er^{d}$ will be denoted $\K(\er^{d})$ and $\dist_{\H}$ will denote the Hausdorff distance on $\K(\er^{d})$.
For $L,M\in\K$ we denote $\dist(L,M)=\min_{x\in L,y\in M}|x-y|$ and define $\R(L,M)$ as the system of all closed balls of diameter $\dist(L,M)$ that intersect both $L$ and $M.$

Let $M\subset \zet$ be finite. For $I\in M^{<\omega}$ we will denote $|I|$
the length of $I,$ and $\prec$ will be used for classical lexicographic ordering.

For $I\in M^{<\omega}$ or $I\in M^{\omega}$ and $n\in \en$ denote $I(n)$ the $n$-th coordinate of $I$ and 
define $I|_{n}\in M^{n}$ as $I|_{n}(i)=I(i)$ for $i=1,..,n.$

For $I,J\in M^{<\omega}$ define $I^*J\in M^{|I|+|J|}$ as $I^*J(i)=I(i)$ for $i=1,..,|I|$ and $I^*J(|I|+i)=J(i)$ for $i=1,..,|J|.$

We will write $I\triangleleft J$ if there is an $n\in \en$ such that $I=J|_{n}.$

If $K$ is a self-similar set with system of generating similarities $\phi_{1},...,\phi_{k}$ $I\in \{1,...,k\}^{<\omega}$ with similarity ratios $r_{1},...,r_{k},$ 
we denote $\phi_{I}=\phi_{I(1)}\circ ... \circ \phi_{I(|I|)}$, $r_{I}=r_{I(1)}\cdot ... \cdot r_{I(|I|)}$ and $K_{I}=\phi_{I}(K).$

For $a,b,c\in\er^{2}$ we will denote by $\angle(a,b,c)\in[0,\pi]$ the angle between vectors $a-b$ and $c-b.$

\section{A simple condition that implies countably many critical values}

\begin{theorem}\label{count}
Let $K\subset\er^{n}$ be a self-similar set such that $\co K$ is a polytope.
Then $K$ has at most countably many critical values. 
\end{theorem}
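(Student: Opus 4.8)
The plan is to use the self-similarity of $K$ to write every critical value in the form $r_{I}\cdot e$ for a finite word $I$ and a critical value $e$ coming from a ``first-level'' critical point, and then to use the polytope hypothesis to see that only finitely many such $e$ can occur; since there are countably many words $I$, this yields the conclusion.

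First I would record two elementary facts. If $x$ is a critical point then $x\in\co(N(x))\subseteq\co K=:P$, so every critical point lies in the polytope $P$, and, taking $a\in N(x)$, the critical value $d:=\dist(x,K)=|x-a|\le\diam P$ is bounded. Critical points with $d=0$ only produce the value $0$, so fix $x$ with $d>0$. Next comes the descent step: choose $a\in N(x)$ and a coding $\alpha\in\{1,\dots,k\}^{\omega}$ of $a$, so that $a\in\phi_{\alpha|_{m}}(P)$ for all $m$ and $\diam\phi_{\alpha|_{m}}(P)\to0$; let $\ell$ be the largest index with $x\in\phi_{\alpha|_{\ell}}(P)$ (finite, since $x\ne a$), and put $J=\alpha|_{\ell}$. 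Then $a\in K_{J}$ gives $\dist(x,K_{J})\le|x-a|=d$, while $K_{J}\subseteq K$ gives $\dist(x,K_{J})\ge d$; hence $\dist(x,K_{J})=d$, the point $z:=\phi_{J}^{-1}(x)$ lies in $P$, satisfies $\dist(z,K)=d/r_{J}$, and $z\notin\phi_{\alpha(\ell+1)}(P)$. Iterating this (along a fixed nearest point at each stage), the critical value is multiplied by $r_{J}^{-1}\ge1$ at every step and stays $\le\diam P$, so after finitely many steps it cannot grow any more: we arrive at a critical point $z$ with value $e=\dist(z,K)$ which is ``reduced'' in the sense that $z$ lies in the compact set $P\setminus\intr\big(\bigcup_{i}\phi_{i}(P)\big)$ and some nearest point of $z$ witnesses this (ideally, $N(z)$ meets at least two of the first-level pieces $\phi_{i}(K)$). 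The original value is then $d=r_{I}\,e$ for the word $I$ accumulated during the descent.

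It remains to bound the set $E$ of reduced critical values, and this is where the hypothesis that $P$ is a polytope enters and is the main obstacle. For a reduced $z$, the set $N(z)$ lies in $\partial B(z,e)$ with $z\in\co(N(z))$, so by Carath\'eodory $z$ is a relative-interior point of a simplex inscribed in $\partial B(z,e)$ and spanned by at most $n+1$ points of $N(z)$; pushing the descent as far as possible into the (finitely many) first-level pieces meeting $N(z)$, each of these spanning points gets pinned to a face of some polytope $\phi_{I}(P)$, and recording which faces occur should leave only finitely many combinatorial configurations --- this is exactly the place where a general $\co K$, having infinitely many exposed points, would allow a continuum of possibilities. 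For a fixed configuration, the conditions ``the chosen faces are inscribed in a common sphere about $z$'', ``$z$ lies in their convex hull'', and ``$z$ realises the distance $e$ to $K$'' should constitute a rigid system leaving only finitely many possible values of $e$, so that $E$ is finite and the set of critical values is contained in the countable set $\{r_{I}e:\ I\in\{1,\dots,k\}^{<\omega},\ e\in E\cup\{0\}\}$. The delicate points --- (i) arranging the descent so that it terminates at a genuinely reduced configuration while retaining enough control of $N(z)$, and (ii) verifying that the combinatorial system in the last step is zero-dimensional in $e$ --- are precisely where the polytope structure must be exploited carefully; the construction in the second half of the paper shows that without it the statement fails.
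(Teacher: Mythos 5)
Your strategy is genuinely different from the paper's: you try to renormalize every critical point down to a ``reduced'' configuration and write every critical value as $r_I e$ with $e$ in a finite set $E$, whereas the paper proves that every positive critical value $r$ is isolated from the left (Proposition~\ref{prop}), which already gives countability; the polytope hypothesis enters there only through Lemma~\ref{L1} (a self-similar set with polytopal convex hull meets any smooth strictly convex set disjoint from it in finitely many points, proved via the quantitative boundary estimate of Lemma~\ref{polycond}). Unfortunately your route has two gaps, one of which is the entire difficulty of the theorem. First, the descent step does not preserve criticality: if $\dist(x,K_J)=\dist(x,K)=d$ and $z=\phi_J^{-1}(x)$, then $N_K(z)=\phi_J^{-1}\bigl(N(x)\cap K_J\bigr)$, and $x\in\co(N(x))$ does not imply $x\in\co(N(x)\cap K_J)$ --- the nearest points of $x$ witnessing criticality may be spread over several cylinders, and after restricting to $K_J$ the point $z$ may fail to be critical, so $e=d/r_J$ need not be a critical value at all and your set $E$ is not a set of critical values of reduced points.

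Second, and decisively, the finiteness of $E$ is asserted rather than proved: the sentence that the inscribed-simplex conditions ``should constitute a rigid system leaving only finitely many possible values of $e$'' is exactly the content of the theorem, and I see no reason it is true as stated. Knowing that a nearest point of $z$ lies on a prescribed face of some $\phi_I(P)$ does not pin that point down, since $K$ intersected with a face is in general still an uncountable fractal; the condition that $n+1$ such points lie on a common sphere about $z$ with $z$ in their convex hull therefore cuts out a positive-dimensional family, not a finite one. Moreover the reduced critical points form, a priori, a continuum (critical points are typically far from isolated even when critical values are countable), and the set of their distances to $K$ has no reason to be finite or countable without a further argument. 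The paper's own Section~4 example shows why some genuinely new input is needed: there the uncountably many critical values arise as limits of partial configurations, a mechanism your combinatorial bookkeeping of faces does not exclude. To repair the argument you would need, in effect, something like the paper's Lemma~\ref{polycond}/Lemma~\ref{L1}, which is where the polytope structure actually does its work.
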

\begin{proof}
Follows directly from Lemma~\ref{L1} and Proposition~\ref{prop} below.
\end{proof}

\begin{corollary}
Let $K\subset\er^{n}$ be a self-similar set generated by the mappings $\phi_{i}(x)=r_{i}A_{i}x+v_{i},$ $i=1,...,k.$  
If for every $i$ there is an $n_{i}$ such that $A_{i}^{n_{i}}=\id$ then $K$ has at most countably many critical values. 
\end{corollary}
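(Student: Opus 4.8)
The plan is to deduce the Corollary from Theorem~\ref{count} by showing that the hypothesis forces $\co K$ to be a polytope; the real content is thus the implication ``each $A_i$ of finite order $\Rightarrow \co K$ is a polytope'', after which the Theorem applies verbatim. First I would record the self-similar structure inherited by the convex body $C:=\co K$. Since each $\phi_i$ is affine, $\co(\phi_i(K))=\phi_i(C)$ and hence $C=\co\bigl(\bigcup_{i=1}^k \phi_i(C)\bigr)$. Two consequences drive the argument. (i) Extreme points pull back: every extreme point of $C$ is an extreme point of some $\phi_i(C)$, so $\operatorname{ext}C\subseteq\bigcup_i\phi_i(\operatorname{ext}C)$. (ii) The normal cone transforms by the orthogonal part: from $\phi_i(C)\subseteq C$ and $N(\phi_i(C),\phi_i(e))=A_i\,N(C,e)$ one gets, for $e=\phi_i(e')$ with $e,e'$ extreme, the inclusion $N(C,e)\subseteq A_i\,N(C,e')$. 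Passing to $(n-1)$-faces, the set $\mathcal F\subseteq S^{n-1}$ of facet normals of $C$ satisfies the analogous covering relation $\mathcal F\subseteq\bigcup_i A_i\mathcal F$, now by the \emph{isometries} $A_i$ rather than by contractions. Being a polytope amounts to $\mathcal F$ being finite together with each face being flat, so the target is the finiteness of $\mathcal F$.

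Next I would exploit two monotonicities along the orbits of these relations. Because distinct facets have disjoint relative interiors, $\sum_{u\in\mathcal F}\mathcal H^{n-1}(F_u)\le \mathcal H^{n-1}(\partial C)<\infty$, where $F_u$ is the facet with normal $u$; hence for every $\delta>0$ only finitely many facets have $(n-1)$-volume $\ge\delta$. Since $\phi_i(F_{A_i^{-1}u})\subseteq F_u$ for each active $i$, the facet volume can only increase along the backward walk $u\mapsto A_i^{-1}u$ in $\mathcal F$, while staying bounded by $\mathcal H^{n-1}(\partial C)$; such a walk therefore enters, and stays in, a fixed finite set of ``large'' facets and is eventually periodic. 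Thus every facet normal is, after finitely many steps, a direction $b$ fixed by some cycle product $A_W=A_{i_1}\cdots A_{i_\ell}$, and around the cycle the inclusions force $\phi_W(F_b)\subseteq F_b$, i.e.\ the facet $F_b$ carries its own $(n-1)$-dimensional self-similar structure whose generating similarities have orthogonal parts the restrictions $A_i|_{b^\perp}$, again of finite order. This is the place for an induction on $n$: the base cases $n\le 2$ are clean, because finitely many finite-order elements of $O(2)$ generate a finite group and the relevant orbit of directions is automatically finite, while the inductive step yields that each facet is itself a polytope.

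The main obstacle, and the reason the naive argument fails, is that the group $G=\langle A_1,\dots,A_k\rangle$ need not be finite: already in $\er^{3}$ two rotations of finite order can generate a dense subgroup of $SO(3)$, so one cannot simply assert that ``finitely many rotations produce finitely many normal directions''. The crux is therefore to bound the number of facets when $G$ is infinite, using only that each individual generator has finite order. The delicate step is to show that the forward orbit of a periodic facet direction $b=A_Wb$ contributes only finitely many genuine facet normals; here I would use that finite order of the generators lets one pass to a power of the cycle acting trivially on the relevant subspace, so that up to a finite iterate $\phi_W$ is a homothety on $F_b$, and combine this with the bound $\sum_{u\in\mathcal F}\mathcal H^{n-1}(F_u)<\infty$ to exclude an infinite family of shrinking facets accumulating along the orbit. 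Controlling this interaction between the possibly infinite rotation dynamics on $S^{n-1}$ and the finite total facet area is the heart of the matter; once $\mathcal F$ is finite and each face is flat, $C=\co K$ is a polytope and Theorem~\ref{count} finishes the proof.
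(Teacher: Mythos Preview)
Your overall strategy---show $\co K$ is a polytope, then invoke Theorem~\ref{count}---is exactly the paper's. The difference is that the paper does not prove the polytope implication at all: it simply cites Theorem~1 of Panzone~\cite{P}, which asserts precisely that a self-similar set whose generating isometries $A_i$ all have finite order has a polytope convex hull. With that citation in hand the corollary is a one-line deduction, and this is what the paper does.

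What you have written is therefore an attempt to reprove Panzone's theorem, and as a sketch it is incomplete in a way you yourself flag. The step ``$\phi_i(F_{A_i^{-1}u})\subseteq F_u$ for each active $i$'' presupposes that whenever $\phi_i(C)$ meets the supporting hyperplane of $C$ in direction $u$, the face of $C$ in direction $A_i^{-1}u$ is again a \emph{facet}; a priori it could be lower-dimensional, so the backward walk on $\mathcal F$ is not obviously well defined. More seriously, your final paragraph concedes that the crux---ruling out infinitely many shrinking facets when the group $\langle A_1,\dots,A_k\rangle$ is infinite---is not carried out; the gesture toward ``passing to a power of the cycle acting trivially on the relevant subspace'' does not by itself produce a bound, since a cycle product $A_W$ of finite-order generators need not itself have finite order. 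So as a proof of the corollary your write-up is not self-contained: either cite Panzone (as the paper does) or supply a full argument for the polytope claim.
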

\begin{proof}
Follows Theorem 1. in \cite{P} which states, that under the assumptions of the corollary $\co K$ is a polytope and therefore we can use Theorem~\ref{count}.
\end{proof}

\begin{lemma}\label{polycond}
Let $k,m\in\en$ and let $K_{j}^{i},$ $i=1,...,k$, $j=1,...,m$ be compact sets such that $\co K_{j}^{i}$ is always a polytope.
Let $\Gamma :\{1,...,k\}^{<\omega}\to \K(\er ^{d}),$ $c>0,$ $1>q>0,$ similarities $\phi^{I}:\er^{d}\to\er^{d}$ and numbers $j(I)=1,...,m$, $I\in\{1,...,k\}^{<\omega}$ be given
such that for every $I\in\{1,...,k\}^{<\omega}$ the following conditions hold:
\begin{enumerate}
\item $\Gamma(I)=\bigcup _{i=1,...,k}\Gamma(I^*{i})$
\item $\Gamma(I^*i)=\phi^{I}(K_{j(I)}^{i})$ for every $i=1,...,k$
\item $\diam \Gamma(I)\leq cq^{|I|}$.
\end{enumerate}
Then there is $\gamma>0$ such that for every $I\in\{1,...,k\}^{<\omega}$ we have
\begin{equation}\label{distdist}
\dist(x,\partial \co \Gamma(I))\geq \gamma\dist(x,\Gamma(I)\cap\partial \co \Gamma(I))
\end{equation}
for every $x\in\Gamma_{I}.$
\end{lemma}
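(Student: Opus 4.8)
The plan is to reduce \eqref{distdist}, by scale-invariance, to a uniform estimate for finitely many ``model'' configurations, and then to prove that estimate by induction on dimension together with a zooming-in argument inside the set. (Throughout, $\partial$ of a polytope denotes its boundary relative to its affine hull, so that the degenerate case of a point is trivial.) By (1) and (2), $\Gamma(I)=\phi^{I}\bigl(\bigcup_{i}K^{i}_{j(I)}\bigr)$, so $\Gamma(I)$ is a similarity image of one of the $m$ fixed compact sets $L_{j}:=\bigcup_{i}K^{i}_{j}$; comparing (2) at $I$ with (2) at $I^{*}i$ shows that every $K^{i}_{j}$ that actually occurs is a similarity image of some $L_{j'}$, hence each $L_{j}$ is a finite union of similarity images of $L_{1},\dots,L_{m}$, and, more generally, for any face $G$ of $C_{j}:=\co L_{j}$ (a polytope, being the hull of the finitely many polytopes $\co K^{i}_{j}$) the set $L_{j}\cap G$ is, up to a similarity, again a set of this type with hull $G$ — namely the union of those children of $L_{j}$ that reach the supporting subspace $\aff G$, restricted to $\aff G$. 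Since applying $\phi^{I}$ multiplies both sides of \eqref{distdist} by the same factor, it suffices to find, for each $j$, a $\gamma_{j}>0$ with $\dist(x,\partial C_{j})\ge\gamma_{j}\dist(x,L_{j}\cap\partial C_{j})$ for $x\in L_{j}$; then $\gamma=\min_{j}\gamma_{j}$ works. Tracing nested subcopies back to $\Gamma$, condition (3) moreover shows their diameters tend to $0$.

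There are only finitely many quadruples $(j,G,F,Q)$ with $G$ a face of $C_{j}$, $F$ a facet of $G$, and $Q$ the hull of one of the children of $L_{j}\cap G$. Over this finite list I would combine the dichotomy ``$Q$ meets $\aff F$, or else $\dist(Q,\aff F)\ge\mathrm{const}\cdot\diam G$'' with the elementary ``cone'' estimate obtained by writing a point of $Q$ as a convex combination of the vertices of $Q$; this yields a single $\kappa_{0}>0$ such that for every such $(j,G,F,Q)$ and every $y\in Q$, either $\diam Q\le\kappa_{0}^{-1}\dist(y,\aff F)$, or $Q$ meets $\aff F$ and $\dist(y,Q\cap\aff F)\le\kappa_{0}^{-1}\dist(y,\aff F)$.

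Now I would prove, by induction on $d$ over all pairs $(j,G)$ with $\dim G=d$ simultaneously, that there is $C_{d}$ with $\dist(y,L_{j}\cap\partial G)\le C_{d}\dist(y,\partial G)$ for $y\in L_{j}\cap G$; $d=0$ is trivial and $d=n$ is what we want. For the step, take $y$ in the relative interior of $G$, let $\epsilon=\dist(y,\partial G)$ be attained at the facet $F^{*}$ (the foot of $y$ then lands in $F^{*}$), and pass to the piece $S^{(1)}$ of $L_{j}\cap G$ containing $y$, with $P^{(1)}=\co S^{(1)}$. Applying the previous paragraph with $Q=P^{(1)}$, $F=F^{*}$: if $\diam P^{(1)}\le\kappa_{0}^{-1}\epsilon$ and $S^{(1)}$ meets $F^{*}$ we finish at once; otherwise $P^{(1)}$ meets $\aff F^{*}$ in a proper (hence lower-dimensional) face $G_{1}\subseteq F^{*}$, the set $S^{(1)}\cap G_{1}$ is nonempty and lies in $L_{j}\cap\partial G$, and $\dist(y,G_{1})\le\kappa_{0}^{-1}\epsilon$. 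In this second case one descends into $(S^{(1)}\cap G_{1},G_{1})$, which has dimension $<d$, applying the inductive hypothesis there after moving $y$ to a nearby point of $G_{1}$ — and, if $G_{1}$ happens to be a facet of $P^{(1)}$, one instead zooms one further level into the children of $P^{(1)}$. Since nested subcopies shrink by (3) and the dimension strictly drops at each descent, the recursion stops after finitely many steps, and since each step uses only the single constant $\kappa_{0}$, the $C_{d}$ so obtained is uniform.

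The main obstacle is the ``descent'' step, and it is what forces the precise shape of the inductive statement: the compactness estimate controls $\dist(y,G_{1})$, the distance to the convex face $G_{1}$, whereas one needs the distance from $y$ to the \emph{set} $S^{(1)}\cap G_{1}$ on that face, and a point lying merely in the convex hull of a set has no local lower bound on its distance to the set. Organising the induction so that one only ever estimates the distance from a point of a set to a subset \emph{of that same set}, while still being able to move between a subcopy and the face it cuts out of the ambient boundary, is the heart of the argument.
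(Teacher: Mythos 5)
Your reduction to finitely many model configurations, the compactness constants, and the count of dimension drops are all in the right spirit, but the proof does not close: the descent step of your induction is exactly the point you flag in your final paragraph as ``the heart of the argument,'' and it is left unresolved. Concretely, after passing to the child $S^{(1)}$ containing $y$ and locating $G_{1}=P^{(1)}\cap\aff F^{*}$, your dichotomy gives $\dist(y,G_{1})\le\kappa_{0}^{-1}\epsilon$, i.e.\ a bound on the distance to the \emph{convex} set $G_{1}=\co S^{(1)}\cap\aff F^{*}$; but to invoke the inductive hypothesis for the pair $(S^{(1)}\cap G_{1},G_{1})$ you must first produce a point of the \emph{set} $S^{(1)}\cap G_{1}$ at distance $O(\epsilon)$ from $y$, and no such bound follows from the dichotomy --- it is a statement of the same kind as the one being proved. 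There is a second, related defect: when the trace does not drop in dimension you propose to ``zoom one further level into the children of $P^{(1)}$,'' but each such zoom reapplies the cone estimate and loses another factor $\kappa_{0}^{-1}$ (note that the trace $P^{(2)}\cap\aff F^{*}\subseteq P^{(1)}\cap\aff F^{*}$ shrinks as you zoom, so the distance to it can only grow), and the number of same-dimension zoom levels is not bounded by the dimension, so the constant $C_{d}$ you extract is not uniform.

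The paper avoids both problems by never changing the ambient face and by measuring distances to \emph{affine hulls} of traces rather than to faces or to pieces of the set. Fixing a facet $f$ of $\co\Gamma(I)$, one zooms through the nested subcopies $\Gamma(I^{*}J|_{l})$ containing $x$ and tracks $\aff f_{l}=\aff(\co\Gamma(I^{*}J|_{l})\cap f)$. When $\dim\aff f_{l+1}=\dim\aff f_{l}$ the two affine hulls coincide and the step costs nothing; when the dimension drops one pays a factor $\beta$ taken over the finite list of model configurations, and this can happen at most $d$ times. The conversion from a distance to an affine subspace into a distance to the set is performed exactly once, at the deepest level $J$ at which the subcopy containing $x$ still meets $f$: there the child of $\Gamma(I^{*}J)$ containing $x$ is disjoint from $f$, so the constant $\tilde\gamma$ (again from the finite list) gives $\dist(x,\Gamma(I^{*}J)\cap\aff f_{|J|})\le\tilde\gamma^{-1}\dist(x,\aff f_{|J|})$, and $\Gamma(I^{*}J)\cap\aff f_{|J|}\subseteq\Gamma(I)\cap f$. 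If you reorganize your argument around this statement --- a bound on $\dist(x,\aff f)$ against $\dist(x,\Gamma(I)\cap\aff f)$ for each fixed facet $f$, rather than an induction on the dimension of faces --- the set-versus-hull difficulty disappears and the losses are automatically confined to the at most $d$ dimension-dropping steps.
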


\begin{proof}
Let $F^{I}$ be system of all faces of $\co\Gamma(I).$
For every $f\in F^{I}$ and $i=1,...,k$ satisfying $\Gamma(I^*i)\cap f=\emptyset$ define 
$$
\gamma_{f,I,i}=\frac{\dist(\Gamma(I^*i),\aff f)}{\max_{x\in \Gamma(I^*i)}\dist(x,\Gamma(I)\cap \aff f)}>0 \quad \text{and}\quad \tilde\gamma=\min_{i,I,f}\gamma_{f,I,i}>0.
$$
For every $f\in F^{I}$ and $i=1,...,k$ satisfying $\Gamma(I^*i)\cap f\not=\emptyset$ and $\dim\aff(\co\Gamma(I^*i)\cap f)<\dim\aff f$ define
$$
\beta_{f,I,i}=\min_{x\in\Gamma(I^*i)}\frac{\dist(x,\aff f)}{\dist(x,\aff(\Gamma(I^*i)\cap \aff f))}>0 \quad \text{and}\quad \beta=\min_{i,I,f}\beta_{f,I,i}>0.
$$
Note that in the definition of both $\tilde\gamma$ and $\beta$ we could have use minimum and obtain their positivity,
since due to the property $(b)$ we are in fact working with finite sets of positive numbers 
(numbers $\beta_{f,I,i}$ are positive because the convex hull of every $\co K_{j}^{i}$ is a polytope). 
Put $\gamma=\beta^{d}\tilde\gamma.$
It remains to prove that (\ref{distdist}) holds.

Choose $I\in\{1,...,k\}^{<\omega}$ and $x\in \Gamma(I).$
If $x\in\partial\co\Gamma(I)$ then the statement is trivial so we can suppose $x\in(\co\Gamma(I))^\circ.$
First we will prove that for every $f\in F_{I}$ we have 
\begin{equation}\label{linf}
\dist(x,\aff f)\geq \gamma\dist(x,\Gamma(I)\cap\aff f).
\end{equation}

By properties $(a)$ and $(c)$ there is $J\in\{1,...,k\}^{<\omega}$ such that 
$x\in\Gamma(I^*J),$ $f\cap \Gamma(I^*J)\not=\emptyset$ and for every $i\in\{1,...,k\}$ we have either
$x\not\in\Gamma(I^*J^*i)$ or $f\cap \Gamma(I^*J^*i)=\emptyset$.
Then for every $0\leq l\leq|J|+1$ there is $f_{l}\in F_{I^*J|_{l}}$ such that $\aff f_{l}=\aff(\co\Gamma(I^*J|_{l})\cap f).$
Now, fro $0\leq l\leq|J|-1$, there are two cases:
\begin{enumerate}
\item $\dim\aff f_{l}=\dim\aff f_{l+1}$ which implies $\dist(x,\aff f_{l})\geq \dist(x,\aff f_{l+1})$
\item $\dim\aff f_{l}>\dim\aff f_{l+1}$ which implies $\dist(x,\aff f_{l})\geq \beta\dist(x,\aff f_{l+1}).$
\end{enumerate}
Moreover, $\dist(x,\aff f_{|J|})\geq \tilde\gamma\dist(x,\Gamma(I^*J)\cap\aff f_{|J|+1}).$
These estimates together with the fact that case $(b)$ can occur at most $d$ times gives us \ref{linf}.
Finally, by (\ref{linf}) we obtain for every $x\in\Gamma_{I}$
$$
\begin{aligned}
\dist(x,\partial \co \Gamma(I))=&\min_{f\in F_{I}}\dist(x,\aff f)\geq \gamma \min_{f\in F_{I}}\dist(x,\Gamma(I)\cap\aff f) \\
\geq&\gamma\dist(x,\Gamma(I)\cap\partial \co \Gamma(I)).
\end{aligned}
$$
\end{proof}

\begin{lemma}\label{L1}
Let $K$ be a self-similar set satisfying assumptions of Theorem~\ref{count}.
Then for every $S\subset \er^{n}\setminus K$ open strictly convex and with smooth boundary, the set $K\cap \overline S$ is finite.
\end{lemma}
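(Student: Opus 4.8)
The plan is to argue by contradiction, playing the strict convexity of $\partial S$ against the self-similar, polytopal structure of $K$, with Lemma~\ref{polycond} as the bridge. First I would record a reduction. If $\co K$ is not $n$-dimensional, then $K$ lies in a proper affine subspace $V$: if $V$ is tangent to $\partial S$, strict convexity forces $\overline S\cap V$ to be a single point and there is nothing to prove; if $V$ meets $S$, then $\overline S\cap V$ is again open in $V$, strictly convex, with smooth boundary and disjoint from $K$, so we may induct on the dimension. Hence assume $\co K$ is an $n$-polytope (and $\co K\cap\overline S\neq\emptyset$, else the claim is trivial). Now suppose $K\cap\overline S$ is infinite. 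Being a closed subset of the compact set $K$, it has an accumulation point $x_0$, and since $S$ is open with $K\cap S=\emptyset$ we get $x_0\in K\cap\partial S$ and $K\cap\overline S\subseteq\partial S$.

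Next I would extract the local geometry at $x_0$. For $y\in K\cap\partial S$, $y\neq x_0$, strict convexity gives $(x_0,y)\subseteq\intr S$, hence $(x_0,y)\cap K=\emptyset$; moreover the tangent hyperplane $H$ to $\partial S$ at $x_0$ supports $S$ and, again by strict convexity, meets $\overline S$ only at $x_0$. Since $\partial S$ is smooth there is an interior ball $B(z,\rho)\subseteq\overline S$ touching $\partial S$ exactly at $x_0$, so $\intr B(z,\rho)\cap K=\emptyset$; writing $\nu=(x_0-z)/\rho$ for the outer normal and expanding $|y-z|^2\ge\rho^2$ against $\langle y-x_0,\nu\rangle\le0$ gives $-\tfrac{1}{2\rho}|y-x_0|^2\le\langle y-x_0,\nu\rangle\le0$ for every $y\in K\cap\partial S$. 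Thus the points of $K\cap\partial S$ accumulating at $x_0$ do so tangentially to $H$, with a uniform quadratic bound on the normal displacement; equivalently, $K$ leaves a quantitatively (quadratically) wide gap on the $S$-side of $H$ near $x_0$.

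I would then use self-similarity. The hypothesis is scale invariant: if $K_J\cap\partial S$ has $x_0$ as an accumulation point then, applying the similarity $\phi_J^{-1}$, the set $K$ and the strictly convex smooth set $\phi_J^{-1}(S)$ satisfy the same hypotheses with $\phi_J^{-1}(x_0)$ an accumulation point of $K\cap\phi_J^{-1}(\partial S)$, because $K\cap\phi_J^{-1}(S)=\phi_J^{-1}(K_J\cap S)=\emptyset$. So we may replace $K$ by arbitrarily small subpieces through $x_0$. Applying Lemma~\ref{polycond} with $\Gamma(I)=K_I$ (so $\co\Gamma(I)=\phi_I(\co K)$ is a polytope) produces $\gamma>0$ with $\dist(x,\partial\co K_I)\ge\gamma\,\dist(x,K_I\cap\partial\co K_I)$ for all $x\in K_I$ and all $I$; this forces points of $K$ near the boundary of a piece to lie near $K_I\cap\partial\co K_I$, and these sets are exactly the similar copies $\phi_I(K\cap g)$ of the finitely many sets $K\cap g$, $g$ a face of $\co K$. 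Combining scale invariance with this, the whole problem reduces to controlling how such scaled copies of ``$K\cap(\text{face})$'' can meet $\partial S$, and in particular to the case where $x_0$ lies on one of them, so that $x_0\in\partial\co K$.

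The main obstacle — and the step I expect to be delicate — is to close the loop: to show that a self-similar set trapped inside the polytope $\co K$ and disjoint from the strictly convex open set $S$ can touch $\partial S$ only finitely often near $x_0$. The mechanism is that each face of $\co K$ is flat, so a scaled copy of it is tangent to the strictly convex surface $\partial S$ in at most one point, and $\co K$ has only finitely many faces; meanwhile the quadratic gap from the second paragraph says that a face (or scaled copy) which touches $\partial S$ near $x_0$ must be nearly tangent to $H$ there and hence cannot also be nearly tangent to $\partial S$ at a second nearby point of $K$, while Lemma~\ref{polycond} prevents $K$ from placing points near $\partial\co K_I$ away from where it already reaches that boundary. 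Quantifying this across all scales — via the blow-up of $K$ at $x_0$, whose convex hull is the polyhedral tangent cone of $\co K$ at $x_0$ and which must lie in a closed half-space because $K\cap S=\emptyset$ and $\partial S$ is smooth — should yield a uniform bound on the number of points of $K\cap\overline S$ in a neighbourhood of $x_0$, contradicting that $x_0$ is an accumulation point. This quantitative reconciliation of the flatness of the faces with the strict convexity of $\partial S$, in which both the polytopal hypothesis and Lemma~\ref{polycond} are indispensable, is the heart of the argument.
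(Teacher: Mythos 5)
You have assembled the right ingredients (Lemma~\ref{polycond}, the smoothness and strict convexity of $\partial S$, the flatness of the faces of $\co K$, and scale invariance), but the decisive step is missing: in your fourth paragraph you explicitly defer the ``quantitative reconciliation'' and only assert that a blow-up at the accumulation point $x_0$ ``should yield a uniform bound.'' That is precisely the part that requires an actual mechanism, and the one the paper supplies is an \emph{induction on the dimension of the faces}, which your sketch does not contain. Concretely: a single application of Lemma~\ref{polycond}, combined with the second-order tangency estimate at $\partial S$, only shows that for all sufficiently small pieces $K_I$ one has $\overline S\cap K_I\subset\partial\co K_I$, i.e.\ the intersection points are pushed onto $(n-1)$-dimensional faces. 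This does not bound their number: the points can still accumulate \emph{within} a face, distributed over infinitely many distinct subpieces, and your observation that a flat face is tangent to $\partial S$ in at most one point does not control points of $K\cap f\cap\partial S$ at which $\aff f$ is transversal to $\partial S$ (there $\aff f$ cuts $S$ in a lower-dimensional strictly convex smooth set that $K\cap f$ must avoid --- which is exactly the same problem one dimension down, not a contradiction).

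The paper closes the loop by observing that the sets $K_J\cap\aff f$, for the finitely many $J\in\Sigma(\delta_1)$ and faces $f$, again satisfy the hypotheses of Lemma~\ref{polycond} inside $\aff f$, and that $S\cap\aff f$ is again open, strictly convex and smoothly bounded in $\aff f$; so the whole argument can be rerun to push the intersection points onto $(n-2)$-dimensional faces, and after $n$ iterations onto $0$-dimensional faces, at which point each sufficiently small piece meets $\overline S$ in at most one point and finiteness follows from the finiteness of $\Sigma(\delta_n)$. Your blow-up/tangent-cone alternative faces additional unaddressed difficulties (the tangent cone of $\co K$ at $x_0$ need not see the fractal distribution of $K$ near $x_0$, and $x_0$ need not a priori lie on $\partial\co K_I$ for the relevant pieces), so as written the proposal identifies the heart of the argument but does not prove it.
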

\begin{proof}
Choose $S\subset \er^{n}\setminus K$ open strictly convex and with smooth boundary.
Under our assumptions $\co K$ (and by self similarity $\co K_{I}$ for every $I\in\{1,...,k\}^{<\omega}$) is a polytope (see \cite{P}, Theorem 1.).
So if we consider the mapping $\Gamma:I\mapsto K_{I}$, it satisfies the conditions of Lemma~\ref{polycond}.
Therefore there is a constant $\gamma$ such that
\begin{equation}\label{1gamma}
\dist(x,\partial \co K)\geq \gamma\dist(x,K\cap\partial \co K)
\end{equation}
for every $x\in K.$
Now, since $\partial S$ is smooth we can find $\delta_{1}>0$ such that if for some $I\in\{1,...,k\}^{<\omega}$ we have $\diam K_{I}\leq\delta_{1}$ then for every $x\in S\cap\co K_{I}$
\begin{equation}\label{2gamma}
\dist(x,S\cap\partial\co K_{I})\leq \frac{1}{2}\gamma\dist(x,(\partial\co K_{I})\cap K_{I})).
\end{equation}
Let 
$$
\Sigma(\delta)=\{I\in\{1,...,k\}^{<\omega}:r_{I}\diam K\leq\delta<r_{I|_{|I|-1}}\diam K\}.
$$
Then $\Sigma(\delta_{1})$ is finite, $K=\bigcup_{I\in\Sigma(\delta_{1})}K_{I}$ and by $(\ref{1gamma})$ and $(\ref{2gamma})$ 
we have $\overline S\cap K_{I}\subset \partial\co K_{I}$ for every $I\in\Sigma(\delta_{1})$.
This means that every $x\in \overline S\cap K_{I}$ belongs to some $n-1$-dimensional face of $K_{I}$.
Now consider all sets and linear spaces of the form $K_{J}\cap \aff f$ and $M=\aff f$, respectively, for $J\in \Sigma(\delta_{1})$ and $f$ face of $K_{J}$.
and open strictly convex sets and with smooth boundary of the form $S\cap M.$
Since there is again at most finitely many of them we can finitely many times apply Lemma~\ref{polycond}.
This way we obtain $\delta_{2}>0$ with the property that for every $I\in\Sigma(\delta_{2})$ we have that every $x\in \overline S\cap K_{I}$ belongs to some $n-2$-dimensional face of $K_{I}$  
Finally, after $n$ steps we can obtain $\delta_{n}$ with the property that for every $I\in\Sigma(\delta_{n})$ the set $\overline S\cap K_{I}$ is empty or singleton.
This means that $\card S\cap K\leq\card \Sigma(\delta_{n})<\infty.$
\end{proof}

We will use the notation $g_0(t)=\sqrt{r_0^2-t^2}$, $0\leq t\leq r_0$.
 
\begin{lemma}  \label{L2}
Let $r_0>0$ be given and denote
$$
U=\{ (t,y):\, 0<t<r_0,\, g_0(t)\leq y<r_0\},
$$
where $g_0(t)=\sqrt{r_0^2-t^2}$ for $0\leq t\leq r_0$.
Let $S$ be a relatively closed subset of $U$ such that $(0,r_0)\in\overline{S}$. Then there exists a continuously differentiable concave function $h$ on $(0,r_0)$ such that
\begin{eqnarray}
&&\forall t\in (0,r_0):\, (t,y)\in S\implies h(t)\leq y, \label{Pr1}\\
&&(0,r_0)\in\overline{S\cap\graph h}. \label{Pr2}
\end{eqnarray}
\end{lemma}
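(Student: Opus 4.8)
\emph{Plan.} The first move is to collapse the two–dimensional statement to a one–dimensional one. Let $D=\{t\in(0,r_0):(t,y)\in S\text{ for some }y\}$ and, for $t\in D$, put $f(t)=\min\{y:(t,y)\in S\}$; the minimum is attained since $S$ is relatively closed in $U$ and the fibre $\{y:(t,y)\in S\}$ is closed in $[g_0(t),r_0)$. Then $g_0\le f<r_0$ on $D$. Because $(0,r_0)\in\overline S$ there are $(s_n,y_n)\in S$ with $s_n\downarrow 0$, $y_n\to r_0$; as the vertical slice of $U$ over $t$ is $[g_0(t),r_0)$ and its length $r_0-g_0(t)\to 0$, this forces $D$ to accumulate at $0$ and, using $f\ge g_0$, forces $f(t)\to r_0$ as $t\to 0^{+}$ along $D$. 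Now note that (\ref{Pr1}) is exactly the condition $h\le f$ on $D$, and that once $h\le f$ holds, any point of $S\cap\graph h$ lying over $t\in D$ must be $(t,f(t))$. Hence the lemma is equivalent to: find a $C^1$ concave $h$ on $(0,r_0)$ with $h\le f$ on $D$ and with $h(t_j)=f(t_j)$ for some $t_j\downarrow 0$ (the contact points then automatically lie in $S$ and converge to $(0,r_0)$).

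I would build $h$ so that near $0$ it hugs the circular arc from below, exploiting two features of $g_0$: it is concave and $C^\infty$ on $(0,r_0)$ with $g_0(0^{+})=r_0$, and $r_0-g_0$ is convex with $r_0-g_0(t)\asymp t^2$ as $t\to 0$, so $U$ is quadratically thin at the cusp $(0,r_0)$. If $S$ meets $\graph g_0$ at points converging to $(0,r_0)$ one may simply take $h=g_0$, so assume this fails, i.e.\ $f>g_0$ on $D\cap(0,\delta_0)$. Passing to a subsequence, arrange $s_{n+1}<s_n/2$ and the reflected slopes $\tfrac{r_0-y_n}{s_n}$ strictly decreasing to $0$ (possible since $\tfrac{r_0-y_n}{s_n}\le\tfrac{r_0-g_0(s_n)}{s_n}\to 0$). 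Choosing each new $s_{n+1}$ small enough makes the consecutive secant slopes of the broken line through $(s_n,y_n)$ essentially equal to $-\tfrac{r_0-y_n}{s_n}$, hence monotone, so that broken line is concave; and since $r_0-g_0$ is convex, on each interval $[s_{n+1},s_n]$ the chord of the arc lies below the arc, which — with the slack built into the choice of the $s_{n+1}$ — keeps the broken line below $f$ there. This produces a concave, piecewise linear $\tilde h$ with $\tilde h\le f$ on $D$ and $\tilde h(s_n)=f(s_n)=y_n$ for all $n$.

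The genuinely delicate step, which I expect to be the main obstacle, is to pass from $\tilde h$ to a $C^1$ function without destroying the contact condition (\ref{Pr2}): rounding off the concave corners of $\tilde h$ pushes its graph strictly below $\tilde h$ precisely at those corners, so the contact points must be relocated into the affine (hence already smooth) parts of the graph. I would do this by re‑selecting the contact nodes — a sparser subsequence of the $s_n$, chosen so that the skipped points of $S$ still lie above the interpolating curve, which the quadratic thinness of $U$ makes room for — and then interpolating by a $C^1$ concave curve assembled piecewise: at each retained node prescribe a derivative strictly between the two adjoining chord slopes (these occur in the order required by concavity of the retained data) and join consecutive nodes by smooth concave arcs realising those one‑sided data while staying below $f$. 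Finally $h$ is completed to a $C^1$ concave function on all of $(0,r_0)$ lying below $f$, which is unproblematic because $f$ is bounded below on compact subintervals of $(0,r_0)$ and one is free to let $h$ decrease steeply away from $0$. The force of the hypotheses (strict convexity of $r_0-g_0$ and the cusp at $(0,r_0)$) is exactly that they supply the slack needed to carry out this simultaneous bookkeeping of concavity, $C^1$‑smoothness, the inequality $h\le f$, and exact contact with $S$ at infinitely many points accumulating at $(0,r_0)$.
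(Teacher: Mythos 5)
Your reduction to a one--dimensional problem (the function $f(t)=\min\{y:(t,y)\in S\}$ on $D$, the equivalence of \eqref{Pr1} with $h\le f$ on $D$, and the observation that contact then occurs exactly at $(t,f(t))$) is correct, and your case split ($h=g_0$ works unless $S$ avoids $\graph g_0$ near the cusp) matches the paper's. The gap is in the construction of the broken line $\tilde h$: you select nodes $(s_n,y_n)\in S$ accumulating at $(0,r_0)$ essentially arbitrarily (thinning only to make the secant slopes monotone) and claim that ``the slack built into the choice of the $s_{n+1}$'' keeps the chords below $f$. But the chord from $(s_{n+1},y_{n+1})$ to $(s_n,y_n)$ takes values close to $y_n=f(s_n)$ at points $t$ just to the left of $s_n$, while $S$ may contain points $(t,y)$ with $t$ arbitrarily close to $s_n$ from the left and $y$ arbitrarily close to $g_0(t)\approx g_0(s_n)$, which in the case under consideration is strictly below $y_n$. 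No choice of $s_{n+1}$, however small, repairs this, because the violation occurs near the \emph{fixed} right endpoint $s_n$; and the subsequent $C^1$ smoothing only aggravates it, since a concave $C^1$ interpolant through two nodes lies \emph{above} the chord joining them. Worse, the contact points genuinely cannot be prescribed in advance: if $S$ contains, besides $p_n=(s_n,g_0(s_n)+c_n)$, a companion point $(s_n-\delta_n,\,g_0(s_n-\delta_n)+c_n^2)$ with $\delta_n\ll c_n\ll s_n$, then any concave $h$ lying below $S$ and touching it at $p_n$ must have secant slope at least of order $c_n/\delta_n$ on $[s_n-\delta_n,s_n]$, hence (by concavity) derivative at least that large on all of $(0,s_n-\delta_n)$, forcing $\limsup_{t\to0^+}h(t)\le r_0-\tfrac{s_n}{4 c_n}\,\delta_n^{-1}\delta_n\ll r_0$ --- incompatible with contact accumulating at $(0,r_0)$. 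So the admissible touching points are dictated by the geometry of $S$, and your procedure has no mechanism to find them.

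That mechanism is precisely what the paper's proof supplies. For each small $z$ it produces a smooth concave $f\in C_z$ passing strictly above some point of $S$, forms the one-parameter family $\beta f+(1-\beta)g_0$ interpolating between $f$ and the global lower bound $g_0$, and takes the critical parameter $\alpha$ at which the family first meets $S$ from below; the resulting $g_z=\alpha f+(1-\alpha)g_0$ automatically lies below all of $S$, touches it somewhere in $[\eps_f,z]$, and is already concave and $C^1$ (being a convex combination of such functions), so no separate smoothing step is needed. The functions $g_z$ for a suitable sequence $z=z_n\downarrow 0$ differ from $g_0$ only on disjoint intervals shrinking to $0$, and $h=\max_n h_n$ finishes the proof. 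Your plan needs a variational selection of this kind (a critical-parameter or concave-minorant argument) before the broken-line-plus-smoothing strategy can work; as written, both the inequality $h\le f$ and the feasibility of the chosen contact points fail.
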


\begin{proof}
If $(0,r_0)\in\overline{S\cap\graph g_0}$ we can take $g_0$ for $h$ and we are done. Assume thus that there exists $t_0>0$ such that
\begin{equation}  \label{empty}
([0,t_0]\times\er)\cap S\cap\graph g_0=\emptyset .
\end{equation}
Choose $z\in (0,t_{0}]$ and define
$$
h_{z}(t):=\left\{
\begin{array}{ll}
\min\{\frac{r_{0}^{2}-tz}{\sqrt{r_0^2-t^2}},r_{0}\},\quad &0<t\leq t_0,\\
g_0(t),&t_0\leq t<r_0
\end{array} \right. .
$$
By our assumptions there is $(t,y)\in S$ with $0<t<z$ such that $h_{z}(t)>y.$
Denote by $C_{z}$ the system of all concave and continuously differentiable functions $f:[0,r_{0})\mapsto(0,r_{0})$ 
such that there is $\varepsilon_{f}>0$ with $\{x:f(x)\not=g_{0}(x)\}\subset[\varepsilon_{f},z].$
Since $h_{z}$ can be (uniformly) approximated by functions from $C_{z}$ we can find $f\in C_{z}$ such that $f(t)<y.$
Put $S_{z}=\{(u,v)\in S:f(u)\leq v\}$ and
$$
\alpha=\inf\{\beta\in[0,1]:\;\text{there is}\:\: (u,v)\in S_{z}\;\;\text{with}\;\; \beta f(u)+(1-\beta)g_{0}(u)\geq v\}.
$$

Put $g_{z}(u)=\alpha f(u)+(1-\alpha)g_{0}(u).$
Then there is $u\in[\varepsilon_{f},z]$ such that $(u,g_{z}(u))\in S$, if $(c,d)\in S$ then $g_{z}(c)\leq d$ and $f$ is continuously differentiable concave function such that 
$g_{z}=g_{0}$ on $[0,r_{0})\setminus[\varepsilon_{f},z]$ and $g_{z}\geq g_{0}$ on $[0,r_{0}).$

Now use the above procedure inductively to construct for every $n\in\en$ continuously differentiable convex functions $h_{n}:[0,r_{0})\mapsto(0,r_{0})$ and intervals $[a_{n},b_{n}]$ such that 

\begin{itemize}
\item $a_{n}\to 0,$ 
\item $a_{n}>b_{n+1}$
\item there is $u_{n}\in[a_{n},b_{n}]$ such that $(u_{n},h_{n}(u_{n}))\in S$
\item if $(c,d)\in S$ then $h_{n}(c)\leq d$ 
\item $h_{n}=g_{0}$ on $[0,r_{0})\setminus[a_{n},b_{n}]$
\item $h_{n}\geq g_{0}$ on $[0,r_{0}).$
\end{itemize}
To finish the proof it suffices to put $h=\max_{n\in\en}h_{n}.$
\end{proof}

\begin{proposition}\label{prop}
Let $K\subset\er^{n}$ be a set with the property that for every $S\subset \er^{n}\setminus K$ open strictly convex with smooth boundary the set $K\cap \overline S$ is finite.
Let $r>0$ be a critical value of $K$. Then there exists $\ep>0$ such that there are no critical values of $K$ in $(r-\ep,r)$.
\end{proposition}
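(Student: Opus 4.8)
The plan is to derive a contradiction from the assumption that critical values accumulate at $r$ from below, by producing near a suitable nearest point a strictly convex open set with smooth boundary which is disjoint from $K$ but whose closure meets $K$ in infinitely many points, contradicting the hypothesis. Throughout I take $K$ to be closed and bounded — both hold in the application, where $K$ is self-similar and hence compact; closedness is needed for the nearest-point notions, boundedness for the compactness arguments below. So suppose no such $\ep$ exists. Then there is a strictly increasing sequence of critical values $r_m\nearrow r$ with critical points $y_m$, i.e. $\dist(y_m,K)=r_m$ and $y_m\in\co N(y_m)$. Since $B(y_m,r_m)^\circ$ is a strictly convex open set with smooth boundary disjoint from $K$, the hypothesis already gives that $K\cap\overline{B(y_m,r_m)}=K\cap\partial B(y_m,r_m)$ is finite; in particular every $N(y_m)$ is finite. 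As the $y_m$ lie in the bounded set $\{x:\dist(x,K)\le r\}$, after a subsequence $y_m\to y$. Writing $y_m=\sum_{j=0}^{n}\lambda_j^m q_j^m$ by Carath\'eodory's theorem, with $q_j^m\in N(y_m)$, $\lambda_j^m\ge 0$, $\sum_j\lambda_j^m=1$, and passing to a further subsequence so that $\lambda_j^m\to\lambda_j$ and $q_j^m\to q_j$, one checks that $B(y,r)^\circ\cap K=\emptyset$ and $|y-q_j|=\lim_m|y_m-q_j^m|=\lim_m r_m=r$, so $q_j\in K$, $\dist(y,K)=r$, $q_j\in N(y)$, and $y=\sum_j\lambda_j q_j\in\co N(y)$. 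Thus $y$ is a critical point with critical value $r$; translate so that $y=0$.

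Next I would isolate a nearest point of $0$ approached by points of $K$ lying strictly outside $B(0,r)$. Every $q_j^m\in K$ satisfies $|q_j^m|\ge\dist(0,K)=r$ and $|q_j^m-y_m|=r_m<r$. For some index $j$ one cannot have $q_j^m=q_j$ for all large $m$: otherwise, for large $m$ the point $y_m$ would lie in $\aff\{q_0,\dots,q_n\}$ and be equidistant from all of $q_0,\dots,q_n$, but the points of that affine hull equidistant from all the $q_j$ form a single point — the circumcenter of the simplex they span — so the $y_m$ would coincide for all large $m$, contradicting that the numbers $r_m=\dist(y_m,K)$ are pairwise distinct. Fix such a $j$, put $q:=q_j\in N(0)\subset\partial B(0,r)$, and pass to a subsequence with $w_m:=q_j^m\ne q$. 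Since $K\cap\partial B(0,r)$ is finite, only finitely many $w_m$ have $|w_m|=r$; discarding these we obtain $w_m\in K$ with $|w_m|>r$ and $w_m\to q$. This is the one place where the criticality relation $y_m\in\co N(y_m)$ is essentially used.

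Now I would build the forbidden set. Rotate so that $q=r\,e_n$ is the ``top'' of $\partial B(0,r)$, and work in the notation of Lemma~\ref{L2} with $r_0=r$ (for $n\ge 3$ one replaces Lemma~\ref{L2} by its $(n-1)$-variable analogue, proved identically; I describe $n=2$). Since $B(0,r)^\circ\cap K=\emptyset$, near $q$ the set $K$ lies above the arc $\graph g_0$; after passing to a further subsequence — and disposing, by a symmetric argument, of the degenerate configurations in which the $w_m$ approach $q$ on or above the tangent line $\{y=r_0\}$ — the $w_m$ lie in the region $U$ of Lemma~\ref{L2}, the set $\{w_m\}$ is relatively closed in $U$ with unique cluster point $q=(0,r_0)\notin U$, and $(0,r_0)\in\overline{K\cap U}$. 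Applying Lemma~\ref{L2} to $S:=K\cap U$ gives a continuously differentiable concave $h$ on $(0,r_0)$ with $g_0\le h\le r_0$ (hence $h$ non-increasing, and $h'(0^+)=0$ because near $0$ it agrees with $g_0$ on a sequence of intervals shrinking to $0$ and is $C^1$), such that $K\cap U$ lies on or above $\graph h$ and $(0,r_0)\in\overline{S\cap\graph h}$ — so $\graph h$ meets $K$ in infinitely many points accumulating at $q$. Let $\Omega$ be the bounded convex planar region bounded above by the concave curve which follows $\graph h$ for $t>0$ and $\graph g_0$ for $t\le 0$, and bounded below by $\graph(-g_0)$. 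Then $\Omega$ is disjoint from $K$: on $\{t\le 0\}$ and on the part of $\{t>0\}$ lying below $\graph g_0$, $\Omega$ is contained in $B(0,r_0)^\circ$, which misses $K$; on the part of $\{t>0\}$ between $\graph g_0$ and $\graph h$, $\Omega$ lies in $U$, where $K$ lies on or above $\graph h$. Since $h'(0^+)=0=g_0'(0^-)$, the boundary of $\Omega$ is smooth at $q$. Take $S'$ to be $\Omega$ with its two remaining corners at $(\pm r_0,0)$ (both far from $q$) rounded off (and, if needed, after a routine strengthening of Lemma~\ref{L2} to make $h$ strictly concave near $q$); then $S'$ is open, strictly convex, with smooth boundary, still disjoint from $K$ (being contained in $\Omega$), and $\partial S'$ agrees with $\partial\Omega$ near $q$, so $K\cap\overline{S'}$ is infinite — it contains the points of $K\cap\graph h$ lying near $q$. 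This contradicts the hypothesis on $K$ and finishes the proof.

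The step I expect to be the main obstacle is the last construction: $S'$ must be simultaneously strictly convex, smooth, disjoint from $K$, and carry infinitely many points of $K$ on its boundary, and the delicate point is that the contact points cluster exactly at $q$, so the rounding and truncation of the subgraph of $h$ must leave a whole neighborhood of $q$ in $\partial S'$ intact — this is precisely why one needs $\partial\Omega$ to be already smooth at $q$ (i.e. $h'(0^+)=0$) and the rounded corners to lie away from $q$. A further source of bookkeeping is the reduction of the $n$-dimensional statement to the planar Lemma~\ref{L2} and the treatment of the degenerate approach directions of the $w_m$. The dichotomy based on the uniqueness of the simplex circumcenter is the other genuinely non-formal step, and it is exactly where the criticality of the $y_m$ is used.
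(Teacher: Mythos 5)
Your first half (the limiting argument producing a critical point $y$ for the value $r$ via Carath\'eodory, and the circumcenter dichotomy extracting $w_m\in K$ with $|w_m-y|>r$, $w_m\to q\in N(y)$, $w_m\neq q$) is correct, and is in fact more careful than the corresponding step in the paper. The second half, however, has two genuine gaps, both located exactly where you depart from the paper's construction. The first is the case $n\ge 3$: you invoke an ``$(n-1)$-variable analogue of Lemma~\ref{L2}, proved identically,'' but the proof of Lemma~\ref{L2} is intrinsically one-dimensional --- the uniform approximation of the barrier $h_z$ inside a class of one-variable concave $C^1$ functions, the interpolation $\alpha f+(1-\alpha)g_0$, and the final $h=\max_n h_n$, whose $C^1$-smoothness rests on the bumps living on disjoint subintervals $[a_n,b_n]$ of a line. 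Producing a $C^1$ concave function of $n-1$ variables lying below a given set and touching it on a set accumulating at a prescribed boundary point is a substantially harder problem, and nothing in the paper supplies it. The paper avoids it entirely: it fixes the limit direction $u$ of the critical points, defines the one-variable function $g(t)=\dist(s_0+tu,(s_0+tu+u^\perp)\cap K)$, applies the planar Lemma~\ref{L2} to the planar set $\{(t,g(t))\}$, and takes for the forbidden body the rotationally symmetric set $B_{r_0}(s_0)^{\circ}\cup\{s_0+tu+v:|v|<h(t)\}$. If you keep your pole-based picture you need either this slicing device or an honest multivariate lemma.

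The second gap is the parenthetical ``disposing, by a symmetric argument, of the degenerate configurations in which the $w_m$ approach $q$ on or above the tangent line.'' There is no symmetric argument: points on or above the tangent hyperplane at $q$ lie outside the region $U$ of Lemma~\ref{L2}, so the lemma says nothing about them. Using $|w_m-y_m|=r_m\le|y_m-q|$ (since $q\in K$) one can check that such $w_m$ are confined to a thin sliver $0\le\langle w_m-q,q/r\rangle\le|y_m|^2/(r-|y_m|)$ with $|w_m-q|\le 2|y_m|$, but points in that sliver may lie on an arc that is convex when viewed from the ball, and a concave barrier lying below a set of points in strictly convex position can touch at most two of them; so in this case your chosen sequence $w_m$ is simply unusable for any construction of this type, and you must return to the condition $y_m\in\co N(y_m)$ and extract different points of $K$. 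This is precisely what the paper's set $T=\{t\in(0,r_0):g(t)<r_0\}$ is for: by definition only slices containing points of $K$ strictly inside the cylinder of radius $r_0$ are retained, so the degenerate approach directions never enter the argument. (Your remaining concerns --- strict convexity of the final body and the $C^1$ matching at $q$ --- are real but are shared with, indeed glossed over by, the paper itself, whose set $C$ is only claimed to be convex; they are minor next to the two gaps above.)
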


\begin{proof}
Assume, for the contrary, that there exists a sequence $r_i\nearrow r_0$ of critical values of $K$, and let $s_i$ be the corresponding critical points ($i=0,1,\ldots$). Passing to a subsequence, we can assume that $s_i\to s_0$ and $\frac{s_i-s_0}{|s_i-s_0|}\to u\in S^{d-1}$ (clearly, $s_i\neq s_0$ for $i\geq 1$ since the critical values are different). 
We know from the definition of criticality that for all $i\geq 0$,
\begin{equation}  \label{E1}
B_{r_i}(s_i)^{\circ}\cap F=\emptyset
\end{equation}
and
\begin{equation}  \label{E2}
s_i\in \co (B_{r_i}(s_i)\cap K).
\end{equation}
Denote
$$g(t):=\dist (s_0+tu,(s_0+tu+u^\perp)\cap K).$$
Using \eqref{E1} for $i=0$, we get that 
\begin{equation} \label{E3}
g(t)\geq \sqrt{r_0^2-t^2},\quad 0<t<r_0.
\end{equation}
Denote, further, $T:=\{ t\in(0,r_0):\, g(t)<r_0\}$ and $S:=\{(t,g(t)):\, t\in T\}$. It follows from \eqref{E2} that $0\in\overline{T}$ and, hence, $(0,r_0)\in\overline{S}$. We can thus apply Lemma~\ref{L2} and obtain a concave function $h$ defined on an interval $(0,t_0)$ with $h\leq g$ and such that $(0,r_0)\in\overline{S\cap\graph h}$. The set
$$C:= B_{r_0}(s_0)^{\circ}\cup\{ s_0+tu+v:\, |v|<h(t),\, t\in (0,t_0)\}$$
does not intersect $K$ due to \eqref{E1} and since $h\leq g$.
On the other hand, $C$ is an open convex set with smooth boundary that intersects $K$ in infinitely many points, which is a contradiction with our assumptions.
\end{proof}

If we for some $i=1,...,k$ allow $\phi_{i}(x)=r_{i}A_{i}x+v_{i},$ with $A_{i}^{n}\not =Id$ for every $n\in\en,$ we can obtain $K\cap\partial S$ infinite for some $S$ convex with smooth boundary such that $S^\circ\cap K=\emptyset.$ Note that due to Proposition~\ref{prop} every self-similar set with uncountable set of critical values provides such an example. 
But we make it with much less work and obtain even stronger result.

\begin{example}
Define $\phi_{0}, \phi_{1}:\er^{2}\mapsto\er^{2}$ by formulas 
$$
\phi_{0}(x,y)=\frac{1}{5}
\begin{pmatrix}
\cos\alpha & -\sin\alpha\\
	\sin\alpha & \cos\alpha
\end{pmatrix}
\cdot
\begin{pmatrix}
	x\\
	y
\end{pmatrix}
+
\begin{pmatrix}
	1\\
	0
\end{pmatrix}
$$

$$
\phi_{1}(x,y)=\frac{1}{5}
\begin{pmatrix}
\cos\alpha & -\sin\alpha\\
	\sin\alpha & \cos\alpha
\end{pmatrix}
\cdot
\begin{pmatrix}
	x\\
	y
\end{pmatrix}
-
\begin{pmatrix}
	1\\
	0
\end{pmatrix}
$$for some irrational angle $\alpha$ and define $K$ as the unique compact set that satisfies $K=\phi_{0}(K)\cup\phi_{1}(K).$

Then there is a strictly convex compact set $U$ with smooth boundary such that $U^{\circ}\cap K=\emptyset$ and $U\cap K$ is uncountable.
\end{example}

If we consider any direction $l_{k}=(\cos(k\alpha),\sin(k\alpha))$ for $k\in \en_{0}$ then there are exactly two (non degenerated) maximal line segments $l^{0}_{k},l^{1}_{k}\subset \partial(\co K)$  parallel to $l_{k}.$
Denote system of all line segments $l^{0}_{k}$ and $l^{1}_{k}$ by $L.$
Moreover, $\overline{\cup L}=\partial (\co C)$ and since directions $l_{k},$ $k\in \en_{0}$ are dense in $S^{1}$ the set $\partial (\co K)$ is a continuously differentiable Jordan curve.

Define $\textbf{K}$ as a system of all sets of the form $K_{I}:=\phi_{I}(K)$ for some $I\in \{0,1\}^{<\omega}$ 
and $L_{I}$ as a system of all line segments of the form $\phi_{I}(l)$ for some $l\in L.$
Note that every $l\in L_{I}$ belongs to $\partial (\co {K_{I}})$ and that $\overline{\cup L_{I}} =\partial (\co {K_{I}})$.

Let $\varepsilon>0.$ We say that a strictly convex compact set with smooth boundary $A\subset \er^{2}$ cuts $\varepsilon$-well $M=K_{I}\in \textbf{K}$ if $ A\cap\partial  M=[a,b]\in L_{I}$ and
$|\tn(\partial A,a)-\tn(\partial A,b)|\leq\varepsilon$.
For $a,b,v\in \er^{2}$ denote $P(a,b,v)$ the (closed) half plane with $[a,b]\subset\partial P(a,b,v)$ that contains $v.$

\begin{lemma}\label{l}
Let $M=K_{I}\in \textbf{K}$ and $\varepsilon>0.$ Let $A\subset \er^{2}$ be a strictly convex compact set with smooth boundary that cuts well $M.$
Then there is a strictly convex compact set with smooth boundary $\tilde A$ and $M^*, M'\in \textbf{K}$ such that  
\renewcommand{\labelenumi}{(\arabic{enumi})}
\begin{enumerate}
\item $M^*,M'\subset M$ and $M^*\cap M'=\emptyset$
\item $\tilde A\subset A$
\item $A\setminus \tilde A\subset (\co(M)\cap A)+ B(0,\varepsilon)$
\item $\tilde A$ cuts $\varepsilon$-well both $M^*$ and $M'$
\item $\diam M^*\cap \tilde A,\diam M'\cap \tilde A<\varepsilon.$
\item $\mathcal{H}^{1}(\partial \tilde A\setminus \partial A)<\diam(\co(M)\cap A)+\varepsilon$
\item $C\cap \tilde A\subset \partial\tilde A +B(0,\varepsilon)$
\end{enumerate} 
\end{lemma}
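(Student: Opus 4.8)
The plan is to work entirely inside $M=K_I$ and exploit the self-similar decomposition $K_I=\phi_I(K)=\phi_I(\phi_0(K))\cup\phi_I(\phi_1(K))$, so that $M$ contains the two disjoint subpieces $\phi_{I^*0}(K)$ and $\phi_{I^*1}(K)$, both again members of $\textbf{K}$ and both strictly inside $\co M$ away from the segment $[a,b]=A\cap\partial M\in L_I$. Since $[a,b]\in L_I$, it equals $\phi_I(l)$ for some $l\in L$, i.e. one of the two maximal segments of $\partial(\co K)$ in a direction $l_m=(\cos m\alpha,\sin m\alpha)$ (pulled back by $\phi_I$). The key geometric input is that $\partial(\co K)$ is a $C^1$ Jordan curve whose segments in $L$ are dense, so near the two endpoints of $l$ the boundary $\partial(\co K)$ contains further segments from $L$ whose directions are arbitrarily close to that of $l$; pushing forward by $\phi_{I^*0}$ and $\phi_{I^*1}$ and then by a deep enough iterate we find segments $[a^*,b^*]\in L_{J^*}$ and $[a',b']\in L_{J'}$ on $\partial(\co M^*)$, $\partial(\co M')$ (for suitable descendants $M^*=K_{J^*}$, $M'=K_{J'}$ of $\phi_{I^*0}(K)$, $\phi_{I^*1}(K)$) that are as short as we like and whose directions differ from that of $[a,b]$ by at most $\varepsilon/10$, say.

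The construction of $\tilde A$ then goes as follows. Because $A$ cuts well $M$, near $a$ and near $b$ the boundary $\partial A$ leaves $[a,b]$ making a total turning at most $\varepsilon$; I want to deform $A$, keeping $\partial A$ fixed outside a neighbourhood of $[a,b]$, so that its boundary is pushed \emph{inward} to hug two tiny circular-type caps sitting on $[a^*,b^*]$ and on $[a',b']$, and so that in between it runs along a line segment. Concretely, take two small circular arcs, one tangent to $\co M^*$ along $[a^*,b^*]$ and one tangent to $\co M'$ along $[a',b']$, each of radius small enough that (5) holds, i.e. $\diam(M^*\cap\tilde A),\diam(M'\cap\tilde A)<\varepsilon$; join these two arcs to the unchanged part of $\partial A$ by $C^1$ convex arcs lying in the half-plane $P(a,b,\text{(interior side)})$, staying inside $A$ and within distance $\varepsilon$ of the segment $[a,b]$ (this uses that $A$ cuts $M$ \emph{well}, so the old boundary near $a,b$ is already within $O(\varepsilon)$ of the line through $a,b$ once we are $O(\diam(\co M\cap A))$ away). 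Smoothing the junctions and checking strict convexity is routine once the pieces are chosen; the total replaced arc $\partial\tilde A\setminus\partial A$ runs roughly parallel to $[a,b]$ over a length $\le\diam(\co M\cap A)$, giving (6), while (3) holds because everything we removed from $A$ lies in the thin slab $(\co M\cap A)+B(0,\varepsilon)$, and (2) holds by construction. Property (1) is just the disjointness of $\phi_{I^*0}(K)\supset M^*$ and $\phi_{I^*1}(K)\supset M'$; property (4) holds because the two replacement arcs were chosen with turning $\le\varepsilon$ along $[a^*,b^*]$ and $[a',b']$ respectively, and these are segments of $L_{J^*}$, $L_{J'}$; property (7), that $C\cap\tilde A$ lies in a $\varepsilon$-neighbourhood of $\partial\tilde A$, follows since $C=\partial(\co K)$ meets $\tilde A$ only in the thin slab near $[a,b]$ and near the two caps, all of which are within $\varepsilon$ of $\partial\tilde A$ — here one uses $\overline{\cup L_I}=\partial(\co K_I)$ together with $\dist$-estimates on how deep into $\co M$ the set $K$ reaches near a boundary segment (an estimate of the flavour of Lemma~\ref{polycond}, but in the much simpler two-map setting it is immediate from $r_0=1/5$).

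The main obstacle, and the step deserving the most care, is the simultaneous reconciliation of (3), (5), (6) and (7): we must push $\partial A$ inward far enough to reach the tiny caps on $[a^*,b^*]$ and $[a',b']$ and to keep $\diam(M^*\cap\tilde A)$, $\diam(M'\cap\tilde A)$ below $\varepsilon$, yet not so far that the removed region $A\setminus\tilde A$ escapes the slab $(\co M\cap A)+B(0,\varepsilon)$, and all while the new arc stays strictly convex and $C^1$ and has length controlled by $\diam(\co M\cap A)+\varepsilon$. The tension is resolved by choosing the descendants $M^*,M'$ deep enough (large $|J^*|,|J'|$) that their convex hulls, and in particular the segments $[a^*,b^*],[a',b']$, sit within distance $\varepsilon^2$ of the line through $[a,b]$ and within the projection of $\co M\cap A$ onto that line; then the inward displacement needed is $o(\varepsilon)$ and every clause is comfortably met. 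I would isolate this as a short sub-lemma: given a $C^1$ convex body $A$ cutting $M$ well and any target pair of short nearly-parallel segments close to $[a,b]$, one can $C^1$-interpolate as described. The rest is bookkeeping with the lexicographic addressing and the density of $L$ in $\partial(\co K)$.
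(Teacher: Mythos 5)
Your proposal follows essentially the same route as the paper's proof: take the two disjoint children of $M$, use the density of the segments of $L$ in the $C^{1}$ curve $\partial(\co K)$ to find short segments whose directions are $\eps$-close to that of $[a,b]$, and shave $A$ down to a smooth strictly convex $\tilde A$ whose boundary hugs those segments while staying in the $\eps$-neighbourhood of $\co M\cap A$. (The paper simply takes $M^{*}=K_{I^{*}0}$, $M'=K_{I^{*}1}$ at depth one and chooses the new segments inside $B(a,\frac{\eps}{2})$ and $B(b,\frac{\eps}{2})$ rather than passing to deeper descendants; also, $C$ in condition (7) denotes $K$ itself rather than $\partial(\co K)$, but your argument for that clause is unaffected.)
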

\begin{proof}
Due to the self-similarity of $C$ we can assume $M=C$ and $ A\cap\partial  M=[a,b]=l^{0}_{0}.$
Put $M^*=K_0$ and $M'=K_1.$
We can suppose that $a\in M^*$ and $b\in M'$.
Since $\partial M^*$ and $\partial M'$ are both continuously differentiable Jordan curves with $a-b\in \tn(\partial M^*,a)$ and $a-b\in (\partial M',b)$
and since $\overline{\cup L_0}=\partial M^*$ and
$\overline{\cup L_1}=\partial M'$ we can choose $[a_{0},b_{0}]\in L_0$ and $[a_{1},b_{1}]\in L_1$
satisfying:
\begin{itemize}
\item $a_{0},b_{0}\subset B(a,\frac{\varepsilon}{2}),a_{1},b_{1}\subset B(b,\frac{\varepsilon}{2})$
\item $A\cap P(a_{0},b_{0}, a),A\cap P(a_{1},b_{1}, b)\subset (M\cap A)+ B(0,\frac{\varepsilon}{2})$
\item $[a_{0},b_{0}]\subset (P(a_{1},b_{1},a)\cap A)^{\circ}$ and $[a_{1},b_{1}]\subset (P(a_{0},b_{0},b)\cap A)^{\circ}$
\end{itemize} 
It is easy to see that these conditions are sufficient for existence of $\tilde A$ satisfying properties 2.-7.
Property 1. holds due to the choice of $M^*$ and $M'.$
\end{proof}

\begin{lemma}\label{k}
There are strictly convex compact sets $A_{n}$ with smooth boundary and systems $\{M_{i}^{n}\}_{i=1}^{2^{n}}\subset \textbf{K}$ such that for every $n\in \en_{0}$ the following conditions hold:
\renewcommand{\labelenumi}{(\arabic{enumi})}
\begin{enumerate}
\item $M_{i}^{n}\cap M_{j}^{n}=\emptyset$ for $i\not=j$
\item if $n>0$ then $A_{n}\subset A_{n-1}$
\item $A_{n}$ cuts $2^{-n}$-well $M_{i}^{n}$ for $i=1..2^{n}$
\item $\diam (A_{n}\cap M_{i}^{n})<2^{-(2n+2)}$
\item if $n>0$ then $M_{2j-1}^{n}\cup M_{2j}^{n}\subset M_{j}^{n-1}$ for $j=1..2^{n-1}$
\item if $n>0$ $\mathcal{H}^{1}(\partial A_{n}\setminus \partial A_{n-1})<2^{-n}$.
\item $K\cap A_{n}\subset \partial A_{n} +B(0,2^{-n})$
\end{enumerate}
\end{lemma}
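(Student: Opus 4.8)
The plan is to build the sets $A_n$ and the families $\{M_i^n\}$ by induction on $n$, with Lemma~\ref{l} doing all the work at each step. Along the way I keep track of two purely bookkeeping facts that drop out of the construction: each $M_i^n$ is of the form $K_J$ with $|J|=|I_0|+n$ for a fixed starting word $I_0$, so that $\diam M_i^n=5^{-n}\diam K_{I_0}$; and the finitely many convex sets $\co M_i^n$ are pairwise \emph{disjoint}, hence at positive pairwise distance. The first holds because Lemma~\ref{l} replaces $M=K_I$ by two of its children $K_{I*0},K_{I*1}$ (as its proof shows); the second propagates from $\co K_0\cap\co K_1=\emptyset$, which is immediate from the formulas for $\phi_0,\phi_1$. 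This disjointness is exactly what will make the successive applications of Lemma~\ref{l} non-interacting.

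For $n=0$ I fix $I_0$ with $\diam K_{I_0}\le\frac{1}{10}$, set $M_1^0:=K_{I_0}$, pick a boundary segment $[a,b]\in L_{I_0}$ of $\co M_1^0$, and take for $A_0$ a thin, short, strictly convex $C^\infty$ region straddling $[a,b]$ and entering $\co M_1^0$ only slightly, arranged so that $\partial A_0\cap\partial\co M_1^0=[a,b]$ and $\diam A_0<1$: then (1),(2),(5),(6) are vacuous, (3) holds since the endpoint tangents of $\partial A_0$ differ from the direction of $[a,b]$ by less than $1$, (4) holds since $\diam(A_0\cap M_1^0)\le\diam M_1^0\le\frac{1}{10}<\frac14$, and (7) holds since every point of $A_0$ lies within $\diam A_0<1$ of $\partial A_0$. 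For the inductive step, given $A_{n-1}$ and $\{M_j^{n-1}\}_{j\le 2^{n-1}}$, I choose $\varepsilon_n>0$ with $\varepsilon_n\le 2^{-(2n+2)}$ and $\varepsilon_n$ smaller than a suitable fraction of the positive number $\min_{j\ne l}\dist(\co M_j^{n-1},\co M_l^{n-1})$, and then apply Lemma~\ref{l} successively for $j=1,\dots,2^{n-1}$, with input $A^{(j-1)}$ (where $A^{(0)}:=A_{n-1}$), parameter $\varepsilon_n$, and $M:=M_j^{n-1}$, obtaining $A^{(j)}:=\tilde A$ and $M_{2j-1}^n:=M^{*}$, $M_{2j}^n:=M'$; finally $A_n:=A^{(2^{n-1})}$.

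Now the verification. By Lemma~\ref{l}(3) the $j$-th step alters $A$ only inside $(\co M_j^{n-1}\cap A^{(j-1)})+B(0,\varepsilon_n)\subset\co M_j^{n-1}+B(0,\varepsilon_n)$; by the choice of $\varepsilon_n$ these alteration regions lie in pairwise disjoint neighbourhoods of the hulls $\co M_j^{n-1}$. Hence at step $j$ the set $A^{(j-1)}$ agrees with $A_{n-1}$ near $\co M_j^{n-1}$, so it cuts $M_j^{n-1}$ exactly as $A_{n-1}$ does, in particular well, and Lemma~\ref{l} applies; no later step touches $\co M_j^{n-1}$, so $A_n$ agrees with $A^{(j)}$ near $M_{2j-1}^n$ and $M_{2j}^n$, giving (3) (each cut is $\varepsilon_n$-well, hence $2^{-n}$-well). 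Properties (1),(2),(5) are immediate from Lemma~\ref{l}(1),(2) and disjointness of the $M_j^{n-1}$; (4) follows from $\diam(A_n\cap M_i^n)\le\diam M_i^n=5^{-n}\diam K_{I_0}<2^{-(2n+2)}$; and (7) is exactly Lemma~\ref{l}(7) at the last step, since $C=K$ there and it yields $K\cap A_n\subset\partial A_n+B(0,\varepsilon_n)\subset\partial A_n+B(0,2^{-n})$. For (6), using $\partial A_n\setminus\partial A_{n-1}\subset\bigcup_{j=1}^{2^{n-1}}(\partial A^{(j)}\setminus\partial A^{(j-1)})$, Lemma~\ref{l}(6), and $\diam(\co M_j^{n-1}\cap A^{(j-1)})\le\diam M_j^{n-1}=5^{-(n-1)}\diam K_{I_0}$,
\[
\mathcal{H}^{1}(\partial A_n\setminus\partial A_{n-1})\le\sum_{j=1}^{2^{n-1}}\bigl(\diam(\co M_j^{n-1}\cap A^{(j-1)})+\varepsilon_n\bigr)\le 2^{n-1}\bigl(5^{-(n-1)}\diam K_{I_0}+\varepsilon_n\bigr),
\]
and since $2^{n-1}5^{-(n-1)}=(2/5)^{n-1}\le 2^{-(n-1)}$, $2^{n-1}\varepsilon_n\le 2^{-n-3}$ and $\diam K_{I_0}\le\frac{1}{10}$, the right-hand side is $<2^{-n}$.

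I expect the main obstacle to be precisely this locality bookkeeping: one must be sure that running Lemma~\ref{l} on one piece $M_j^{n-1}$ does not destroy the cut-well property on the other pieces of the same generation, nor the properties already secured for $M_{2j'-1}^n,M_{2j'}^n$ with $j'<j$. This is what forces $\varepsilon_n$ below the separation scale of the current generation and is where the strong separation of $K$ (positive distance between the $\co M_j^{n-1}$) is genuinely used. The only non-routine inequality, (6), closes up only because the contraction ratio $\frac15$ beats the branching number $2$, i.e. $\sum_n 2^{n-1}5^{-(n-1)}<\infty$; with a ratio $\ge\frac12$ the arc-length added at successive stages could accumulate and this part of the argument would break down.
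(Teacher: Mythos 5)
Your proof is correct and follows essentially the same strategy as the paper: induction on $n$, applying Lemma~\ref{l} to each piece of the current generation with $\varepsilon$ chosen below both $2^{-(2n+2)}$ and the separation scale of the pieces, and using the resulting locality to verify conditions (1)--(7). The only cosmetic differences are that the paper applies Lemma~\ref{l} in parallel and takes $A_n=\bigcap_j\tilde A_j$ rather than iterating sequentially, starts from $M_1^0=K$ rather than a small sub-copy $K_{I_0}$, and verifies (4) and (6) from Lemma~\ref{l}(5) and the induction hypothesis instead of from the explicit diameter decay $5^{-n}$ (which, as you note, is only visible in the proof of Lemma~\ref{l}, not its statement).
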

\begin{proof}
We will proceed by induction by $n.$

$n=0:$ Put $M_{1}^{0}=K$ Choose $A_{0}$ as an arbitrary ball of diameter smaller than $1$ that cuts well $K$.
Validity of all conditions $(1)-(7)$ is obvious.

\textit{Induction step:} Suppose that we have $A_{n-1}$ and $\{M_{i}^{n-1}\}_{i=1}^{2^{n-1}}.$ 
Use Lemma~\ref{l} separately for every $M=\{M_{i}^{n}\},$ $i=1..2^{n-1}$ with $A=A_{n-1}$ and 
$$
\varepsilon= \frac{1}{4}\min (2^{-2n},\min_{i\not= j} \dist (A_{n-1}\cap M_{i}^{n-1}, A_{n-1}\cap M_{j}^{n-1} ))
$$
to obtain $2^{n-1}$ strictly convex compact sets (denote them $\tilde A_{j}$ for $j=1..2^{n-1}$) 
and corresponding $2^{n}$ sets from $\textbf{K}$ (denote them $M_{i}^{n}$ for $i=1..2^{n}$ in a way that $M_{2j-1}^{n}$ and $M_{2j}^{n}$ correspond to $\tilde A_{j}$ for $j=1..2^{n-1}$).
Put $A_{n}=\cap_{j}\tilde A_{j}.$
$A_{n}$ is strictly convex and compact since all $\tilde A_{j}$ are.
Condition $(1)$ holds due to the induction hypothesis and property $(1)$ from Lemma~\ref{l} 
and condition $(2)$ due to property $(2)$ from Lemma~\ref{l} .
Condition $(3)$ holds due to properties $(3)$ and $(4)$ from Lemma~\ref{l} and the fact that $\varepsilon\leq\frac{1}{4}\min_{i\not= j} \dist (A_{n-1}\cap M_{i}^{n-1}, A_{n-1}\cap M_{j}^{n-1} ).$
Condition $(4)$ holds by property $(5)$ from Lemma~\ref{l} and the fact that $\varepsilon\leq2^{-(n+2)}.$
Condition $(5)$ holds due to property $(1)$ from Lemma~\ref{l}.
To prove condition $(6)$ write
$$
\begin{aligned}
\mathcal{H}^{1}(\partial A_{n}\setminus \partial A_{n-1})= \mathcal{H}^{1}(\partial (&\bigcup _{j=1}^{2^{n-1}}\tilde A_{j})\setminus \partial A_{n-1})=\sum _{j=1}^{2^{n-1}}\mathcal{H}^{1}(\partial \tilde A_{j}\setminus \partial A_{n-1})\\
&< 2^{n-1}(2^{-2n}+2^{-2n-2})<2^{-n}, 
\end{aligned}
$$
where the second equality holds due to property $(3)$ from Lemma~\ref{l} and the fact that $\varepsilon\leq\frac{1}{4}\min_{i\not= j} \dist (A_{n-1}\cap M_{i}^{n-1}, A_{n-1}\cap M_{j}^{n-1} )$
and the first inequality holds by property $(6)$ from Lemma~\ref{l}, induction hypothesis (property $(4)$) and the fact that  $\varepsilon\leq2^{-(n+2)}.$
\end{proof}

\begin{theorem}
There is a strictly convex set $U$ with smooth boundary such that $U^{\circ}\cap K=\emptyset$ and $U\cap K$ is uncountable.
\end{theorem}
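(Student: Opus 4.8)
The plan is to take $U=\bigcap_{n\in\en_{0}}A_{n}$, where $A_{n}$ are the strictly convex compact sets produced in Lemma~\ref{k}, and verify that this $U$ has the required properties. First I would observe that $U$ is a nested intersection of compact convex sets, hence compact and convex, and that by property $(2)$ the sequence $A_{n}$ is decreasing, so $U\neq\emptyset$ (it contains, for instance, any accumulation point of the segments $A_{n}\cap\partial\co K_{I}$, which persist down the construction). The smoothness of $\partial U$ is the first technical point: by property $(6)$ the symmetric differences $\partial A_{n}\setminus\partial A_{n-1}$ have $\mathcal{H}^{1}$-measure summing to a convergent series, so the boundaries $\partial A_{n}$ converge (in the Hausdorff metric, and in fact the arcs that get modified shrink geometrically by $(4)$), and the limiting curve $\partial U$ is obtained by gluing countably many arcs of the smooth curves $\partial A_{n}$ together at the endpoints of the cutting segments; one checks that at each gluing point the one-sided tangents agree because $A_{n}$ cuts $2^{-n}$-well (property $(3)$), i.e. the tangent directions at the two endpoints of the cut segment differ by at most $2^{-n}\to 0$. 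This gives a $C^{1}$ Jordan curve; strict convexity of $U$ follows since each $A_{n}$ is strictly convex and no flat segment can survive in the limit (any segment in $\partial U$ would have to lie in $\partial A_{n}$ for all large $n$, contradicting strict convexity of $A_{n}$, or be one of the cut chords, which however get replaced by strictly convex arcs at the next stage and have diameter $<2^{-(2n+2)}$ by $(4)$).

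Next I would show $U^{\circ}\cap K=\emptyset$. Fix $x\in K$. For each $n$, by property $(1)$ and $(5)$ the sets $M_{i}^{n}$ are pairwise disjoint and nested, and $K=\phi_{I}(K)$ for the relevant $I$; the point is that $x\in A_{n}$ forces $x$ to lie within $2^{-n}$ of $\partial A_{n}$ by property $(7)$, namely $K\cap A_{n}\subset\partial A_{n}+B(0,2^{-n})$. If $x\in U^{\circ}$ then $x\in A_{n}^{\circ}$ for every $n$ with $\dist(x,\partial A_{n})\geq\dist(x,\partial U)=:\rho>0$ for all large $n$ (using $\partial A_{n}\to\partial U$); but then $x\in K\cap A_{n}$ with $\dist(x,\partial A_{n})\geq\rho>2^{-n}$ for large $n$, contradicting $(7)$. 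Hence $U^{\circ}\cap K=\emptyset$.

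Finally, uncountability of $U\cap K$ comes from a Cantor-set coding argument. For each infinite word $\sigma\in\{1,2\}^{\omega}$, the branching property $(5)$ picks out a decreasing sequence of sets $M_{\sigma|_{1}}^{1}\supset M_{\sigma|_{2}}^{2}\supset\cdots$ in $\mathbf{K}$, each of the form $K_{I_{n}}$ with $|I_{n}|\to\infty$, so $\diam M_{\cdot}^{n}\to 0$ and the intersection is a single point $x_{\sigma}\in K$. Distinct $\sigma$ give distinct $x_{\sigma}$ because at the first coordinate where $\sigma,\tau$ differ the corresponding sets $M_{i}^{n},M_{j}^{n}$ are disjoint by $(1)$. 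Moreover $x_{\sigma}\in A_{n}\cap M_{\sigma|_{n}}^{n}$ for every $n$ and $A_{n}\supset U$... more precisely, $x_{\sigma}\in\bigcap_{n}A_{n}=U$: indeed $A_{n}\cap M_{\sigma|_{n}}^{n}\neq\emptyset$ and has diameter $<2^{-(2n+2)}$ by $(4)$, and since $x_{\sigma}$ is the unique point of $\bigcap_{n}M_{\sigma|_{n}}^{n}$ while $A_{n}\cap M_{\sigma|_{n}}^{n}$ is a nonempty subset of $M_{\sigma|_{n}}^{n}$ shrinking to a point, that point must be $x_{\sigma}$, so $x_{\sigma}\in A_{n}$ for all $n$. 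Thus $\{x_{\sigma}:\sigma\in\{1,2\}^{\omega}\}\subset U\cap K$ is uncountable. The main obstacle is the first paragraph: assembling the $A_{n}$ into a genuinely $C^{1}$ and strictly convex limit curve, i.e. controlling the tangent directions along the countably many cut points and at their accumulation points; properties $(3)$, $(4)$ and $(6)$ of Lemma~\ref{k} are exactly what is needed, but verifying that they combine to give $C^{1}$ regularity (and not merely rectifiability) of $\partial U$ requires care.
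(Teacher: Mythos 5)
Your proposal is correct and follows essentially the same route as the paper: take $U=\bigcap_n A_n$, get strict convexity from property (6), smoothness from (3), emptiness of $U^\circ\cap K$ from (7), and uncountability via the Cantor coding through properties (1), (4), (5). The $C^1$-regularity issue you flag as the main obstacle is real, but the paper treats it even more tersely than you do, so there is no divergence in substance.
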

\begin{proof}
Consider sets $A_{n}$ and systems $\{M_{i}^{n}\}_{i=1}^{2^{n}}$ form Lemma~\ref{k}. 
Put $U=\cap_{n} A_{n}$. As an intersection of compact sets $U$ is compact. 
To prove that $U$ is strictly convex suppose for contradiction that there is a line segment $[a,b]\subset \partial U.$ 
Find $k\in \en$ such that $\sum _{n=k}^{\infty}2^{-n}<|a-b|.$
This by property $(6)$ from Lemma~\ref{k} implies that there is a non degenerated line segment $[c,d]\subset [a,b]\cap \partial A_{k-1}$ which is in contradiction with the fact that $A_{k-1}$ is strictly convex. Property $(3)$ smoothness of boundary of all $A_{n}$ implies that $A$ has smooth boundary as well.

For $I\in \{0,1\}^{\omega}$ define $x_{I}$ by $x_{I}=\bigcap_{n=1..\infty}M^{n}_{I(n)}.$ 
Due to properties $(1),(4)$ and $(5)$ from Lemma~\ref{k} is the definition correct and for $I,J\in \{0,1\}^{\omega},$ $I\not=J,$ we have $x_{I}\not= x_{J}.$
Put $X=\bigcup_{I\in \{0,1\}^{\omega}}x_{I}.$
Due to property $(3)$ from Lemma~\ref{k} we have $X\subset A_{n}$ for every $n\in \en_{0}$
and by property $(2)$ from Lemma~\ref{k} and the compactness of all sets $A_{n}$ and $K$ we have $x_{I}\in U$ for every $I\in \{0,1\}^{\omega}$ and so $X$ is an uncountable set in $K\cap U.$
Finally, by property $(7)$ from Lemma~\ref{k} we have $U^{\circ}\cap K=\emptyset.$
\end{proof}

\section{Set with uncountably many critical values}
An example of a set with uncountable many critical values can be produced similarly to the previous example, 
but this time we would have to choose $\alpha$ more carefully by manoeuvring with copies in certain levels.
Most of the technicalities with this approach would be because of the fact that the directions between the different copies on our set would depend on $\alpha.$
To avoid these problems we use a small trick that we add to the previous construction two much smaller copies near the center of the set which will be symmetrical by the $x$-axis.

Put 
$$
A^{\alpha}=
\begin{pmatrix}
\cos\alpha & -\sin\alpha\\
	\sin\alpha & \cos\alpha
\end{pmatrix},\;
e_1=
\begin{pmatrix}
	1\\
	0
\end{pmatrix}
\quad
\text{and}\quad
e_2=
\begin{pmatrix}
	0\\
	1
\end{pmatrix}
\quad
\text{and}\quad
q=\frac{1}{1000}
$$
and define contracting similarities $\phi_{\pm1}^{\alpha}, \phi_{\pm2}^{\alpha}:\er^{2}\mapsto\er^{2}$ by formulas 
$$
\phi_{\pm1}^{\alpha}(x)=qA^{\alpha}x\pm \frac{1}{2}e_1,
\quad\text{and}\quad
\phi_{\pm2}(x)=q(qx\pm e_2),
$$
for an angle $\alpha\in [0,2\pi]$ and define $K^{\alpha}$ as the unique compact set that satisfies $K^{\alpha}=\phi_{-1}^{\alpha}(K^{\alpha})\cup\phi_{+1}^{\alpha}(K^{\alpha})\cup\phi_{-2}^{\alpha}(K^{\alpha})\cup\phi_{+2}^{\alpha}(K^{\alpha}).$
We also define $D_{\alpha}=\diam K^{\alpha}.$
It is easy to see that $1\leq D_{\alpha}\leq 1+2q.$
For the rest of the paper we will think of interval $[0,2\pi]$ metrically as a sphere (with $0=2\pi$), all operations will be considered $\mod _{2\pi}$
and intervals will be considered to preserve classical intervals in $[0,2\pi].$

For $I=2^*J\in\{2\}\times\{\pm1,\pm2\}^{<\omega}$ and $\alpha\in[0,2\pi]$ define 
$$
M_{I}^{\alpha}=\min\{y:\text{there is $x\in\er$ such that}\;(x,y)\in K_{I}^{\alpha}\}
$$
and
$$
\R_{I}^{\alpha}=\{B((x,0),M_{I}^{\alpha}):(x,M_{I}^{\alpha})\in K_{I}^{\alpha})\}=\R(K^{\alpha}_{I},K^{\alpha}_{-2^*J}).
$$ 
Note that $M_{I}^{\alpha}$ is always a critical value of $K^{\alpha}_{I}\cup K^{\alpha}_{-2^*J}.$

We start with few simple observations.

\begin{lemma}\label{sqrt}
Let $\frac{2}{3}q\leq a\leq \frac{4}{3}q$ and $0<b\leq q^{2}.$ 
Then
$$
\frac{b}{2a}<a-\sqrt{a^{2}-b} <\frac{3b}{5a}\quad \text{and} \quad 
\frac{2b}{5a}<\sqrt{a^{2}+b}-a <\frac{b}{2a}.
$$
\end{lemma}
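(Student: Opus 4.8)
The plan is to rationalize each difference of square roots and thereby convert every assertion into an elementary polynomial inequality. Under the hypotheses one has $a^2-b>0$, so $0\le\sqrt{a^2-b}<a<\sqrt{a^2+b}$, and multiplying and dividing by the conjugate gives the two identities
\[
a-\sqrt{a^2-b}=\frac{b}{a+\sqrt{a^2-b}},\qquad \sqrt{a^2+b}-a=\frac{b}{\sqrt{a^2+b}+a}.
\]
Since $b>0$, dividing each of the four asserted inequalities through by $b$ turns it into a comparison of reciprocals of positive quantities, hence (after cross-multiplying, which preserves direction) into a linear inequality in $\sqrt{a^2-b}$, respectively in $\sqrt{a^2+b}$.

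Two of the four inequalities are then immediate and use only $a,b>0$. From $\sqrt{a^2-b}<a$ we get $a+\sqrt{a^2-b}<2a$, hence $a-\sqrt{a^2-b}=\tfrac{b}{a+\sqrt{a^2-b}}>\tfrac{b}{2a}$, which is the left half of the first pair; from $\sqrt{a^2+b}>a$ we get $\sqrt{a^2+b}+a>2a$, hence $\sqrt{a^2+b}-a=\tfrac{b}{\sqrt{a^2+b}+a}<\tfrac{b}{2a}$, which is the right half of the second pair.

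For the remaining two I would isolate the square root and square, which is an equivalence here because both sides are nonnegative. The inequality $a-\sqrt{a^2-b}<\tfrac{3b}{5a}$ is equivalent to $5a<3\bigl(a+\sqrt{a^2-b}\bigr)$, i.e.\ to $2a<3\sqrt{a^2-b}$, i.e.\ to $9b<5a^2$; similarly $\tfrac{2b}{5a}<\sqrt{a^2+b}-a$ is equivalent to $2\bigl(\sqrt{a^2+b}+a\bigr)<5a$, i.e.\ to $2\sqrt{a^2+b}<3a$, i.e.\ to $4b<5a^2$. Both resulting inequalities, as well as the validity of the squaring step, follow directly from the bounds $0<b\le q^2$ and $\tfrac23 q\le a\le\tfrac43 q$. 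There is no genuine obstacle in the argument: the only thing to be careful about is the bookkeeping of inequality directions under the successive algebraic manipulations and the sign conditions that make squaring reversible.
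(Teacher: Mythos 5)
Your rationalization strategy is sound and all four reductions are carried out correctly: writing $a-\sqrt{a^2-b}=\tfrac{b}{a+\sqrt{a^2-b}}$ and $\sqrt{a^2+b}-a=\tfrac{b}{\sqrt{a^2+b}+a}$, the two ``easy'' halves need only $\sqrt{a^2-b}<a<\sqrt{a^2+b}$, and the other two are equivalent (legitimately, since both sides are nonnegative when you square) to $9b<5a^2$ and $4b<5a^2$. The paper offers no argument to compare against --- its proof reads ``Simple computation.'' The genuine gap is your final sentence: the inequalities $9b<5a^2$ and $4b<5a^2$ do \emph{not} follow from $0<b\le q^2$ and $\tfrac23 q\le a\le\tfrac43 q$. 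At the corner $a=\tfrac23 q$, $b=q^2$ one has $5a^2=\tfrac{20}{9}q^2<4q^2=4b\le 9b$, so both reduced inequalities fail. Worse, $a^2-b=-\tfrac59 q^2<0$ there, so $\sqrt{a^2-b}$ is not even real (your preliminary assertion ``under the hypotheses one has $a^2-b>0$'' is already false), and a direct check of the second chain gives $\sqrt{a^2+b}-a=\tfrac{\sqrt{13}-2}{3}\,q\approx 0.535\,q$ while $\tfrac{2b}{5a}=0.6\,q$. So your computation, correctly completed, shows that the lemma as stated is false rather than true.

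This is not a defect you can patch by a cleverer estimate: the precise condition your reduction isolates is $9b<5a^2$ (which subsumes $4b<5a^2$ and $b<a^2$), and that must be added to, or must replace, the hypothesis $b\le q^2$. For instance $b\le\tfrac29 q^2$ suffices, since then $9b\le 2q^2<\tfrac{20}{9}q^2\le 5a^2$; the quantities actually substituted for $b$ later in the paper (powers $q^{2|I|}$ and the like, with $q=\tfrac1{1000}$) comfortably satisfy this. You should state the corrected sufficient condition explicitly and note the discrepancy with the lemma's hypotheses; with that emendation your argument is complete.
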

\begin{proof}
Simple computation.
\end{proof}

\begin{lemma}\label{pom1}
Let $K,L,M\in \K(\er^{2})$ and $d>0.$
Suppose that $\dist(L,M)\geq d$ and $\dist(K,L)\leq \dist(K,M).$
Then for every $R\in \R(K,L)$ we have
$$
\dist(R,M)\geq \sqrt{\frac{\dist(K,L)^{2}}{4}+\frac{d^{2}}{2}}-\frac{\dist(K,L)}{2}.
$$
\end{lemma}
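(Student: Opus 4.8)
The plan is to pin down the geometry of a ball $R\in\R(K,L)$ and then reduce the estimate to the median-length (Apollonius) identity. Set $\rho:=\dist(K,L)$, so that $R=B(z,\rho/2)$ for some centre $z$ and $R$ meets both $K$ and $L$. First I would pick $k\in R\cap K$ and $\ell\in R\cap L$; then $|k-z|\le\rho/2$, $|z-\ell|\le\rho/2$, while $|k-\ell|\ge\dist(K,L)=\rho$, so
$$\rho\le|k-\ell|\le|k-z|+|z-\ell|\le\rho .$$
Hence equality holds throughout: $|k-\ell|=\rho$, $|k-z|=|z-\ell|=\rho/2$, and $z$ is the midpoint of the segment $[k,\ell]$.

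The main step is to bound $|z-m|$ for an arbitrary $m\in M$. Since $z$ is the midpoint of $[k,\ell]$, the median-length formula applied to the triangle with vertices $k,\ell,m$ gives
$$|z-m|^2=\frac{|k-m|^2}{2}+\frac{|\ell-m|^2}{2}-\frac{|k-\ell|^2}{4}=\frac{|k-m|^2}{2}+\frac{|\ell-m|^2}{2}-\frac{\rho^2}{4}.$$
By hypothesis $|k-m|\ge\dist(K,M)\ge\dist(K,L)=\rho$ and $|\ell-m|\ge\dist(L,M)\ge d$, so
$$|z-m|^2\ge\frac{\rho^2}{2}+\frac{d^2}{2}-\frac{\rho^2}{4}=\frac{\rho^2}{4}+\frac{d^2}{2}\ge\frac{\rho^2}{4}.$$
Thus $|z-m|\ge\sqrt{\rho^2/4+d^2/2}\ge\rho/2$, whence $\dist(R,m)=|z-m|-\rho/2\ge\sqrt{\rho^2/4+d^2/2}-\rho/2$. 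Taking the infimum over $m\in M$ (attained, since $M$ is compact) gives the assertion.

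I do not expect a real obstacle: once one notices that $R$ must be the ball erected on a shortest segment between $K$ and $L$ --- which is exactly what makes the value $|k-\ell|=\rho$ available --- the rest is the elementary computation above. The only points needing a little care are this rigidity observation and invoking the median-length identity (equivalently, expanding $|z-m|^2$ with $z=(k+\ell)/2$ and using $|k-\ell|^2=|(k-m)-(\ell-m)|^2$ to eliminate the inner product).
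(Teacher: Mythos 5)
Your proof is correct and is essentially the paper's own argument: the paper places the contact points $u\in K$, $v\in L$ at $(0,\pm r)$ and adds the two inequalities $|s-u|^2\ge\dist(K,L)^2$ and $|s-v|^2\ge d^2$, which is exactly your median-length (parallelogram) identity written in coordinates. Your explicit rigidity observation that $|k-\ell|=\rho$ and that the centre is the midpoint is a small detail the paper leaves implicit, but the substance is identical.
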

\begin{proof}
Choose $u\in K$ and $v\in L$ such that $R=\R(u,v)$ and choose $s=(x,y)\in M.$
By choosing the appropriate system of coordinates we can suppose $u=(0,r)$ and $v=(0,-r)$ and so $R=B(0,r)$ and what we want to prove is $x^{2}+y^{2}\geq \frac{d^{2}}{2}+r^{2}.$
Now,
\begin{equation}\label{firstin}
d^{2}\leq x^{2}+(y+r)^{2}=x^{2}+y^{2}+r^{2}+2yr
\end{equation}
and
\begin{equation}\label{secondin}
4r^{2}\leq x^{2}+(y-r)^{2}=x^{2}+y^{2}+r^{2}-2yr.
\end{equation}
By adding (\ref{firstin}) to (\ref{secondin}) we get $2x^{2}+2y^{2}+2r^{2}\geq d^{2}+4r^{2},$ which implies the desired inequality.
\end{proof}

\begin{corollary}\label{cr}
Let $I\in\{2\}\times\{-1,1\}^{<\omega},$ $\alpha \in[0,2\pi]$ and $L\subset\{2\}\times\{-1,1\}^{|I|-1}.$
Suppose that 
$$
M^{\alpha}_{I}\leq \min _{J\in L} M^{\alpha}_{J}.
$$
Then for every $R\in\R^{\alpha}_{I}$ and every $J\in L,$ we have
$$
\dist(R, K^{\alpha}_{J})\geq \frac{q^{2|I|-1}}{7}.
$$

\end{corollary}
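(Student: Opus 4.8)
The plan is to reduce the statement to an application of Lemma~\ref{pom1} with $K=K^\alpha_I$, $L=K^\alpha_{-2^*J'}$ (the mirror copy that realizes the critical ball) and $M=K^\alpha_J$, and then to control the two quantities that appear on the right-hand side of that lemma: the distance $\dist(K^\alpha_I,K^\alpha_{-2^*J'})=2M^\alpha_I$ (which equals the diameter of any $R\in\R^\alpha_I$, hence $2r$ in the notation of Lemma~\ref{pom1}) from above, and the gap $d=\dist(K^\alpha_{-2^*J'},K^\alpha_J)$ from below. The hypothesis $M^\alpha_I\le\min_{J\in L}M^\alpha_J$ is exactly what guarantees $\dist(K,L)\le\dist(K,M)$: the point of $K^\alpha_I$ sitting at height $M^\alpha_I$ is closer to the $x$-axis reflection $K^\alpha_{-2^*J'}$ than any point of $K^\alpha_J$ can be, since $K^\alpha_J$ never dips below height $M^\alpha_J\ge M^\alpha_I$ while $K^\alpha_{-2^*J'}$ is the reflection across $\{y=0\}$. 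Actually one must be slightly careful and instead compare via the concrete geometry, but this is the heart of why the ordering hypothesis is placed there.

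First I would record the scale estimates. Every $I\in\{2\}\times\{-1,1\}^{<\omega}$ has $r_I=q\cdot q^{|I|-1}=q^{|I|}$ (one factor $q$ from the first symbol $2$, whose map has ratio $q^2$—wait, here I must be careful: $\phi_{\pm2}$ has ratio $q^2$, $\phi_{\pm1}$ has ratio $q$), so in fact $r_I=q^2\cdot q^{|I|-1}=q^{|I|+1}$. Consequently $\diam K^\alpha_I=r_I D_\alpha\le (1+2q)q^{|I|+1}$, and the two "level-$(|I|-1)$ after the initial $2$" copies $K^\alpha_I$ and $K^\alpha_{-2^*J}$ both lie inside $\phi_{\pm2}(K^\alpha)$, which are tiny balls of radius about $q^2$ centered near $\pm q e_2$; hence $M^\alpha_I$ is of order $q^2$, more precisely $\frac{2}{3}q^2\cdot q^{\,?}\le\cdots$. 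I would pin down explicit bounds of the form $c_1 q^{|I|+1}\le M^\alpha_I\le c_2 q^{|I|+1}$ and, crucially, a lower bound $d=\dist(K^\alpha_{-2^*J'},K^\alpha_J)\ge c_3 q^{|I|-1}$ coming from the fact that distinct copies $K^\alpha_{J}$, $J\in\{2\}\times\{-1,1\}^{|I|-1}$, are separated at scale $q^{|I|-1}$ (they sit at the "$\pm e_1$" positions, separated by order $1$ after dividing out $|I|-1$ applications of contraction ratio $q$), while the reflected copy $K^\alpha_{-2^*J'}$ lives near the axis. Here $J'$ denotes the truncation of $I$ after its last coordinate, matching the definition of $\R^\alpha_I=\R(K^\alpha_I,K^\alpha_{-2^*J'})$.

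With $2r=\dist(K,L)\le 2c_2 q^{|I|+1}$ and $d\ge c_3 q^{|I|-1}$, Lemma~\ref{pom1} gives
$$
\dist(R,K^\alpha_J)\ \ge\ \sqrt{\frac{r^2}{1}+\frac{d^2}{2}}-r\ \ge\ \sqrt{r^2+\tfrac12 c_3^2 q^{2|I|-2}}-r,
$$
and since $r\le c_2 q^{|I|+1}\ll q^{|I|-1}$ the square root exceeds $r+\eta q^{|I|-1}$ for a fixed $\eta>0$ (expand $\sqrt{r^2+\delta}\ge r+\frac{\delta}{2\sqrt{r^2+\delta}}\ge r+\frac{\delta}{2\cdot 2d/\sqrt2}$ type estimate, or just note $\sqrt{a^2+b}-a$ is comparable to $b/d$ when $a\le d$). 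Plugging $q=\frac1{1000}$ and tracking the universal constants $c_2,c_3,\eta$ should comfortably yield the clean bound $\frac{q^{2|I|-1}}{7}$; in fact the exponent $2|I|-1$ is weaker than the $q^{|I|-1}$ I obtained, so there is slack and the crude constants will not be an issue.

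The main obstacle I expect is not any single estimate but getting the bookkeeping of indices and scales exactly right: identifying which contraction ratio ($q$ versus $q^2$) attaches to which symbol, correctly reading off that $\R^\alpha_I$ pairs $K^\alpha_I$ with its axis-reflection $K^\alpha_{-2^*J'}$, and verifying that the hypothesis $M^\alpha_I\le\min_{J\in L}M^\alpha_J$ really does deliver the inequality $\dist(K,L)\le\dist(K,M)$ needed to invoke Lemma~\ref{pom1}. Once the geometry is set up, the rest is the two-line computation above combined with Lemma~\ref{sqrt} to make the final numeric constant $\tfrac17$ explicit.
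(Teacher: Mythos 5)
Your overall plan (feed the two copies into Lemma~\ref{pom1} and finish with Lemma~\ref{sqrt}) is indeed the paper's plan, but your instantiation of Lemma~\ref{pom1} has the roles of $K$ and $L$ swapped, and with your assignment the lemma's hypothesis is simply false. You take $K=K^\alpha_I$, $L=K^\alpha_{-2^*J'}$, $M=K^\alpha_J$ and claim that $M^\alpha_I\le\min_{J}M^\alpha_J$ yields $\dist(K,L)\le\dist(K,M)$; with your assignment that inequality reads $2M^\alpha_I\le\dist(K^\alpha_I,K^\alpha_J)$, which fails badly: $M^\alpha_I$ is of order $q$ for \emph{every} $I$ (all these copies live inside $K^\alpha_2$, which sits at height about $q$ above the axis and has diameter about $q^2$; the paper uses $\tfrac23 q\le M^\alpha_I\le\tfrac43 q$), whereas two sibling copies $K^\alpha_I$, $K^\alpha_J$ differing only in the last symbol are only about $q^{|I|}$ apart. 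Your proposed bounds $c_1q^{|I|+1}\le M^\alpha_I\le c_2q^{|I|+1}$ confuse the diameter of $K^\alpha_I$ (which is indeed of order $q^{|I|+1}$) with its height above the axis (which is of order $q$), and your $d=\dist(K^\alpha_{-2^*J'},K^\alpha_J)$ is the cross-axis distance (of order $q$), not the quantity that matters. The correct instantiation is $K=K^\alpha_{-2^*J'}$, $L=K^\alpha_I$, $M=K^\alpha_J$: then $\dist(K,L)=2M^\alpha_I$ is the diameter of $R$, the ordering hypothesis gives $\dist(K,M)\ge M^\alpha_I+M^\alpha_J\ge 2M^\alpha_I=\dist(K,L)$ because $K^\alpha_J$ stays above height $M^\alpha_J$ while the reflected copy stays below height $-M^\alpha_I$, and $d=\dist(L,M)=\dist(K^\alpha_I,K^\alpha_J)\ge q^{|I|}-2D_\alpha q^{|I|+1}$ is the sibling separation.

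A sanity check exposes the error: your concluding estimate $\dist(R,K^\alpha_J)\gtrsim q^{|I|-1}$ cannot hold, since $R$ touches $K^\alpha_I$ and therefore $\dist(R,K^\alpha_J)\le\diam K^\alpha_I+\dist(K^\alpha_I,K^\alpha_J)$, which is of order $q^{|I|}\ll q^{|I|-1}$ when $J$ is the nearest sibling. The exponent $2|I|-1$ in the statement is essentially sharp (a ball of radius about $q$ tangent to $K^\alpha_I$ from below approaches the neighbouring copy to within about $\dist(K^\alpha_I,K^\alpha_J)^2/(4M^\alpha_I)$), so the ``slack'' you found is a red flag, not a safety margin. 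With the corrected roles, Lemma~\ref{pom1} gives $\dist(R,K^\alpha_J)\ge\sqrt{(M^\alpha_I)^2+d^2/2}-M^\alpha_I$, and Lemma~\ref{sqrt} with $a=M^\alpha_I\le\tfrac43 q$ and $b=d^2/2\ge q^{2|I|}/4$ produces the stated lower bound of order $q^{2|I|-1}$; this is exactly the paper's computation.
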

\begin{proof}
Choose $R\in\R^{\alpha}_{I}$ and $J\in L.$
Then $\dist( K^{\alpha}_{I}, K^{\alpha}_{J})\geq q^{|I|}-2D_{\alpha}q^{|I|+1}$ and
applying Lemma~\ref{pom1} (first inequality) and Lemma~\ref{sqrt} together with the fact that $\frac{2}{3}q\leq M^{\alpha}_{I}\leq \frac{4}{3}q$ (third inequality) we can estimate
$$
\begin{aligned}
\dist(R, K^{\alpha}_{J})&\geq \sqrt{\frac{(M^{\alpha}_{I})^{2}}{4}+\frac{(q^{|I|}-2D_{\alpha}q^{|I|+1})^{2}}{2}}-\frac{M^{\alpha}_{I}}{2}\\
&\geq \sqrt{\frac{(M^{\alpha}_{I})^{2}}{4}+\frac{q^{2|I|}}{4}}-\frac{M^{\alpha}_{I}}{2} \\
&\geq \frac{q^{2|I|}}{2}\cdot\frac{2}{5M^{\alpha}_{I}}\geq \frac{q^{2|I|-1}}{7}.
\end{aligned}
$$
\end{proof}

Next is the key lemma that will be essential for the inductive procedure in Proposition~\ref{key}.

\begin{lemma}\label{intervals}
Suppose that $[a,b]\subset[\frac{\pi}{2},\frac{3\pi}{2}]$ and $k_{0}\in\en$.
Then there is $k>k_{0}$ and $[c,d]\subset[a,b]$ such that for every $\alpha\in[c,d]$, every $I\in\{2\}\times\{\pm2\}^{k}$ and every $R_{\pm}\in\R^{\alpha}_{I^*\mp1}$ we have 
\begin{enumerate}
\item $\dist(K^{\alpha}_{I^*\pm1},R_{I^*\mp1})\geq q^{2|I|+2}$
\item $11q^{2(|I|+1)}\geq|M^{\alpha}_{I^*1}-M^{\alpha}_{I^*-1}|\geq q^{2(|I|+1)}$
\item $\dist_{\H}(R_{I^*1},R_{I^*-1})\leq 2q^{|I|+1}.$
\end{enumerate}
\end{lemma}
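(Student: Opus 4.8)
The idea is to first understand the geometry of the configuration $K^\alpha_I$ for $I\in\{2\}\times\{\pm2\}^k$. Because each $\phi_{\pm2}$ has contraction ratio $q^2$ and places a copy along the $y$-axis at height $\pm q$ (scaled), the set $K^\alpha_I$ for such an $I$ is a tiny copy of $K^\alpha$, of diameter $\approx D_\alpha q^{2k+1}$, sitting very close to (a point on) the $y$-axis, and its two immediate children $K^\alpha_{I^*\pm1}$ are obtained by applying $\phi_{\pm1}^\alpha = qA^\alpha x \pm \tfrac12 e_1$ inside that copy, i.e. they are separated horizontally (in the internal coordinates of $K^\alpha_I$) by a distance $\approx q^{2k+1}$, rotated by the angle $\alpha$. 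The quantity $M^\alpha_{I^*\pm1}$ is the minimal height of $K^\alpha_{I^*\pm1}$, and the difference $M^\alpha_{I^*1}-M^\alpha_{I^*-1}$ is essentially the difference in heights of two copies that differ by a horizontal translation of length $\approx q^{2k+1}$ followed by rotation by $\alpha$ and the rest of the (fixed) internal structure; to leading order this difference behaves like $c\cdot q^{2k+1}\sin(\alpha+\text{const})\cdot(\text{internal scale})$, which is of order $q^{2(k+1)}$ precisely when $\alpha$ is bounded away from the zeros of that sine, and that is arranged by staying in $[\pi/2,3\pi/2]$ (away from $0,\pi$) — this is why the hypothesis $[a,b]\subset[\pi/2,3\pi/2]$ appears.

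First I would make these estimates quantitative: write $M^\alpha_{I^*\pm1}$ as $q^{2k+1}$ times a function of $\alpha$ (and of the fixed "tail" geometry of $K^\alpha$), show this function is continuous in $\alpha$, and compute its value and a two-sided bound on $|M^\alpha_{I^*1}-M^\alpha_{I^*-1}|$ uniformly for $\alpha$ in a suitable subinterval. Concretely, I expect $|M^\alpha_{I^*1}-M^\alpha_{I^*-1}|$ to be comparable to $q^{2k+1}\cdot q\cdot|\text{something}(\alpha)|$, and on any closed subinterval $[c,d]$ of $(\text{relevant open set})$ one gets both a lower bound $\geq q^{2(k+1)}$ and an upper bound $\leq 11 q^{2(k+1)}$ — the constant $11$ coming from the crude bound $D_\alpha\leq 1+2q$ and $\diam K^\alpha \leq 1+2q$ together with slack in $q=1/1000$. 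This gives item~(2). For item~(3): $R_{I^*1}$ and $R_{I^*-1}$ are balls (by definition of $\R^\alpha_{I^*\mp1}$) of radii $M^\alpha_{I^*\mp1}\approx q^{2k+1}\cdot(\text{order }1)$, hence radii of order $q^{2k+1}$, actually of order $q\cdot q^{2k+1}\cdot(\dots)$ — wait, more precisely $\tfrac23 q^{?}$; here I would use the analogue of "$\tfrac23 q\leq M^\alpha_I\leq \tfrac43 q$" from Corollary~\ref{cr} rescaled to this level, getting radii $\leq \tfrac43 q^{2k+?}$ and centers on the $x$-axis within horizontal distance $\approx q^{2k+1}$ of each other, so $\dist_\H(R_{I^*1},R_{I^*-1})\leq 2q^{|I|+1}$ follows from the triangle inequality for Hausdorff distance applied to balls (difference of radii plus distance of centers).

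For item~(1), $\dist(K^\alpha_{I^*\pm1},R_{I^*\mp1})$: here $R_{I^*\mp1}$ is a ball realizing the distance between $K^\alpha_{I^*\mp1}$ and its mirror image across the $x$-axis, and $K^\alpha_{I^*\pm1}$ is the horizontally-displaced sibling. I would apply Lemma~\ref{pom1} (or Corollary~\ref{cr}, appropriately rescaled) with $K = K^\alpha_{I^*\pm1}$, $L = K^\alpha_{I^*\mp1}$, $M$ its reflection, using that the vertical separation $d$ between $L$ and its reflection is $\approx 2M^\alpha_{I^*\mp1}$, of order $q^{2k+1}$ — actually of order $q^{2k+?}$; combined with the horizontal sibling separation $\dist(K,L)\approx q^{2k+1}$ this yields, via the $\sqrt{\tfrac{\dist(K,L)^2}{4}+\tfrac{d^2}{2}}-\tfrac{\dist(K,L)}{2}$ bound and Lemma~\ref{sqrt}, a lower bound of order $q^{2(2k+1)+?}$, which is $\geq q^{2|I|+2}$ with room to spare. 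The extraction of the interval $[c,d]$ and the index $k>k_0$ is then just: pick $[c,d]\subset(a,b)$ small enough and contained in the open set where the leading-order coefficient of $|M^\alpha_{I^*1}-M^\alpha_{I^*-1}|$ is bounded below, and pick $k$ large enough ($k>k_0$ and large enough that all the "$\leq$" error terms, which carry extra powers of $q$, are dominated).

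The main obstacle is item~(2), specifically the \emph{lower} bound $|M^\alpha_{I^*1}-M^\alpha_{I^*-1}|\geq q^{2(|I|+1)}$: one must show that passing from $K^\alpha_{I^*-1}$ to $K^\alpha_{I^*1}$ — a horizontal shift by the fixed vector $e_1$ in internal coordinates, under the same rotation $A^\alpha$ — genuinely changes the minimal height by a definite amount, uniformly in $\alpha\in[c,d]$ and uniformly in the (large) level $k$. This requires a careful analysis of how $M^\alpha$ (as a functional of the set, i.e. "lowest point" of $K^\alpha$) responds to a horizontal translation of a sub-copy, and in particular that the lowest point of $K^\alpha$ is attained "robustly" in the relevant direction so that the first-order term does not vanish; the sign and nonvanishing is exactly what the restriction to $[\pi/2,3\pi/2]$ buys us, since there $A^\alpha e_1$ has a nonzero, sign-definite vertical component. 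Everything else is the routine $q=1/1000$ bookkeeping, using Lemma~\ref{sqrt} and Lemma~\ref{pom1} to convert the geometric picture into the stated inequalities.
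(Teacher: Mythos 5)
There is a genuine gap, and it sits exactly in the step you yourself flag as the main obstacle: the two-sided bound (2) on $|M^{\alpha}_{I^*1}-M^{\alpha}_{I^*-1}|$. Your proposed mechanism is that the relevant vertical displacement has the form $(\text{translation length})\cdot\sin(\alpha+\text{const})$ and that the hypothesis $[a,b]\subset[\frac{\pi}{2},\frac{3\pi}{2}]$ keeps this sine bounded away from zero. This is backwards. The two sibling copies $K^{\alpha}_{I^*1}$ and $K^{\alpha}_{I^*-1}$ are exact translates of one another by the vector $rA^{\theta}e_1$, where $r$ is the similarity ratio of $\phi_I$ and $\theta$ is the total rotation accumulated along the word ($\alpha$ times the number of $\phi_{\pm1}$-type letters), so $M^{\alpha}_{I^*1}-M^{\alpha}_{I^*-1}$ is precisely the vertical component $r\sin\theta$ of that vector. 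For the upper bound $11q^{2(|I|+1)}$ in (2) to hold one needs $\sin\theta$ to be \emph{tiny}, of order $q^{|I|+1}$, not of order $1$; if $\sin\theta$ were bounded below by a constant, as your sketch arranges, the difference would be of order $q^{|I|+1}$ and the upper bound in (2) would fail by a factor of roughly $q^{-(|I|+1)}$. The whole content of the lemma is that the lower and upper bounds hold simultaneously, which forces the accumulated angle $k\alpha$ to lie, modulo $2\pi$, in a window of the form $[4q^{k+1},10q^{k+1}]$.

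The missing idea is therefore the arithmetic selection of $k$ and $[c,d]$: the paper uses the density in $[0,2\pi]$ of numbers of the form $(2l\pi+7q^{k+1})/k$ to find $k>k_0$ and $\kappa\in(a,b)$ with $k\kappa\equiv 7q^{k+1}\pmod{2\pi}$, and then shrinks to $[c,d]=[\kappa-\eps,\kappa+\eps]$ so that $k\alpha\bmod 2\pi$ stays in $[4q^{k+1},10q^{k+1}]$ throughout. In particular $k$ is not merely ``large enough that the error terms are dominated,'' as you suggest; for a generic large $k$ and generic $\alpha$ in your interval, condition (2) is simply false. This is the ``very specific Diophantine approximation'' the paper alludes to before Proposition~\ref{key}. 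Once (2) is secured, your handling of (3) (distance of centres plus difference of radii) and of (1) (via Lemma~\ref{pom1}, Lemma~\ref{sqrt} and Corollary~\ref{cr}) does track the paper's computations, but without the Diophantine step the proof does not get off the ground.
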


\begin{proof}
First note that the set $\{\frac{2n\pi+7q^{m+1}}{|I|+m}:n,m\in\en\}$ is dense in $[0,2\pi].$
Therefore we can find $l,k\in\en,$ $k>k_{0},$ such that $\kappa:=\frac{2l\pi+7q^{k+1}}{k}\in(a,b)$. 
This also means that $k\kappa=7q^{k+1}$ and so we can find $\eps>0$ such that $[\kappa-\eps,\kappa+\eps]\subset[a,b]$ 
and such that 
\begin{equation}\label{incl}
k\alpha\in[4q^{k+1},10q^{k+1}]\quad \text{for every}\quad \alpha\in[\kappa-\eps,\kappa+\eps].
\end{equation}
Put $[c,d]=[\kappa-\eps,\kappa+\eps]$.

Choose $J\in\{\pm1\}^{k}$ and $\alpha\in[c,d]$ and put $K^{\pm}=K_{I^*J^*\pm1}^{\alpha}$, $M^{\pm}=M^{\alpha}_{I^*J^*\pm1}$ and choose $(x_{\pm},M^{\pm})\in K^{\pm}$ and $R_{\pm}\in\R^{\alpha}_{I^*J^*\pm1}.$
Without any loss of generality we can suppose that $M^{-}<M^{+}.$
The conditions we want to prove are then
\begin{enumerate}
\item $\dist(K^{\pm},R_{\mp})\geq q^{2(|I|)+2}$
\item $10q^{|I|+1}\geq M^{-}-M^{+}\geq q^{|I|+1}$
\item $\dist_{\H}(R_{+},R_{-})\leq 2q^{|I|+1}$.
\end{enumerate}
We prove the second condition.
First observe that 
$$
K^{+}=K^{-}+q^{|I|+1}(\cos((|I|)\alpha),\sin((|I|)\alpha)).
$$
This implies 
$$
(q^{|I|+1}-2D_{\alpha}q^{|I|+2})\sin(|I|\alpha)\leq M^{+}-M^{-}\leq (q^{|I|+1}+2D_{\alpha}q^{|I|+2})\sin(|I\alpha)
$$ 
and 
$$
(q^{|I|+1}-2D_{\alpha}q^{|I|+2})\cos(|I|\alpha)\leq x^{+}-x^{-}\leq (q^{|I|+1}+2D_{\alpha}q^{|I|+2})\cos(|I|\alpha).
$$
Using the fact that $\alpha\geq\sin(\alpha)\geq\frac{1}{2}\alpha$ if $\alpha\in[0,\frac{\pi}{2}]$ and (\ref{incl}) we have
$$\label{mineq}
q^{2(|I|+1)}\leq M^{+}-M^{-}\leq 11q^{2(|I|+1)}.
$$
which proves the second condition.

The third condition follows similarly from
$$
\dist_{\H}(R_{+},R_{-})\leq |x^{+}-x^{-}|+|M^{+}-M^{-}|\leq 2q^{|I|+1}.
$$

For the first condition we actually need to prove only the case
$$
\dist(K^{-},R_{+})\geq10q^{2(|I|)+1},
$$
the other case follows directly from Corollary~\ref{cr}.
Choose a coordinate system such that $x^{-}>x^{+}=0$ and choose $c=(c_{1},c_{2})\in K^{-}$.
Denote $a=(x^{+},0)=(0,0),$ $b=(x^{+},M^{+})=(0,M^{+})$ and $(u,v)=(c_{1}-x^{-},c_{2}-M^{-})$.
Then we have $v\geq 0$, $|u|\leq D_{\alpha}q^{|I|+2}$ and the first condition transforms into
$$
\sqrt{(x^{-}+u)^{2}+(M^{-}+v)^{2}}-M^{+}\geq  q^{2|I|+2}.
$$
Using (\ref{mineq}) we obtain
$$
\begin{aligned}
\sqrt{(x^{-}+u)^{2}+(M^{-}+v)^{2}}-M^{+}\geq &\sqrt{(x^{-}+u)^{2}+(M^{-})^{2}}-M^{-}-|M^{+}-M^{-}|\\
\geq &\frac{(x^{-}+u)^{2}}{2M^{-}}- 10q^{2(|I|+1)}\\
\geq &\frac{3}{8q}((x^{-})^{2}-2x^{-}|u|)- 10q^{2(|I|+1)}\\
\geq &\frac{3}{8q}(q^{2(|I|+1)}-D_{\alpha}^{2}q^{2|I|+3})- 10q^{2(|I|+1)}\\
\geq &\frac{3}{8}q^{2|I|+1}-\bigl(10+\frac{3D_{\alpha}^{2}}{8}\bigr)q^{2(|I|+1)}\\
\geq &\frac{3}{8}q^{2|I|+1}-11q^{2(|I|+1)}\geq\frac{q^{2|I|+1}}{4}\geq q^{2|I|+2}.
\end{aligned}
$$
\end{proof}

The following proposition provides us with the inductive procedure that will allow us to find appropriate $\alpha.$
Geometrically, as we have seen in the proof of the previous lemma, we will be looking to obtain the vector of translation of two nearest copies of $K^{\alpha}$ to be almost parallel to $x$-axis.
This way we will be able to touch some such copies with balls centered on the $x$-axis that will have positive distance from the rest of the set.
Algebraically, what we are basically looking for is an $\alpha$ with a very specific Diophantine approximation.

\begin{proposition}\label{key}
There is a sequence of intervals $[\kappa_{n},\nu_{n}]\subset[0,2\pi]$ a sequence $k_{n}\in\en$ and a mapping $\Phi:\{\pm1\}^{<\omega}\mapsto\{2\}\times\{\pm1\}^{<\omega}$ with $\Phi(\emptyset)=\emptyset$
such that for every $n\in\en$
\begin{enumerate}\renewcommand{\labelenumi}{(\Alph{enumi})}
\item if $n>1$ then $k_{n}>2k_{n-1}+1$
\item if $n>1$ then $[\kappa_{n},\nu_{n}]\subset[\kappa_{n-1},\nu_{n-1}]$
\item for every $I\in\{\pm1\}^{n-1}$ and every $\alpha\in[\kappa_{n},\nu_{n}]$ we have 
$$
\dist(K^{\alpha}_{\Phi(I^*\pm1)}),R_{\Phi(I^*\pm1)})\geq q^{2k_{n}+2}
$$
\item for every $I\in\{\pm1\}^{n-1}$ and every $\alpha\in[\kappa_{n},\nu_{n}]$ we have 
$$
11q^{k_{n}+1}\geq|M^{\alpha}_{\Phi(I^*1)}-M^{\alpha}_{\Phi(I^*-1)}|\geq q^{k_{n}+1}
$$
\item $\Phi(I)\in\{2\}\times\{\pm1\}^{k_{n}}$
\item if $n>1$ and $I\in\{\pm1\}^{n-1}$ then $\Phi(I)\triangleleft\Phi(I^*{\pm1})$
\item if $I\in\{\pm1\}^{n-1}$ then 
\begin{equation}
M_{\Phi(I^*-1)}^{\alpha}\leq M_{J}^{\alpha}\label{mineq2}
\end{equation}
for every $J\in\{2\}\times\{\pm1\}^{k_{m}}$ such that $\Phi(I)\triangleleft J$ end every $\alpha\in[\kappa_{n},\nu_{n}]$
\item if $n>1$ and $I\in\{\pm1\}^{n-1}$ then 
$$
11q^{k_{n}+1}\geq M_{\Phi(I^*\pm1)}^{\alpha}-M_{\Phi(I)}^{\alpha}\geq0
$$ 
for every $\alpha\in[\kappa_{n},\nu_{n}].$
\item if $n>1$ then for every $I\in\{\pm1\}^{n-1}$, $\alpha\in[\kappa_{n},\nu_{n}]$, $R\in\R_{\Phi(I)}$ and $R_{\pm}\in\R_{\Phi(I^*\pm1)}$ we have
$$
\dist(R_{\pm},R)\leq 2q^{k_{n}+1}
$$
\end{enumerate}
\end{proposition}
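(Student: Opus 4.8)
The plan is to construct the intervals $[\kappa_n,\nu_n]$, the integers $k_n$, and the map $\Phi$ by induction on $n$, using Lemma~\ref{intervals} at each step to perform the ``refinement''. For $n=1$ we set $\Phi(\emptyset)=\emptyset$ and apply Lemma~\ref{intervals} with $[a,b]=[\pi/2,3\pi/2]$ (note $[\kappa_1,\nu_1]$ must land here so that the small-angle estimates $\alpha\geq\sin\alpha\geq\alpha/2$ used inside that lemma remain available at every later stage) and $k_0=1$; this produces $k_1$ and an interval $[\kappa_1,\nu_1]=[c,d]$, and we define $\Phi(\pm1)$ to be $2^*J^*{\pm1}$ for a fixed choice of $J\in\{\pm1\}^{k_1-1}$ (the lemma is stated for $I\in\{2\}\times\{\pm2\}^k$, so one reads off its conclusions with that $I$ and the extra coordinate giving length $k_1$). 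The minimality property (G), i.e.\ \eqref{mineq2}, is arranged by always letting the ``$-1$-branch'' be the one with the smaller $M$-value, which is exactly the normalization ``$M^-<M^+$'' already made inside the proof of Lemma~\ref{intervals}.

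For the induction step, suppose $[\kappa_{n-1},\nu_{n-1}]$, $k_{n-1}$ and $\Phi$ on $\{\pm1\}^{\leq n-1}$ are built. For each $I\in\{\pm1\}^{n-1}$ we have a word $\Phi(I)\in\{2\}\times\{\pm1\}^{k_{n-1}}$; we want to extend it by a block of $\pm2$'s followed by a single $\pm1$. Apply Lemma~\ref{intervals} with $[a,b]=[\kappa_{n-1},\nu_{n-1}]$ and $k_0=2k_{n-1}+1$ to get a single $k=k_n>2k_{n-1}+1$ and $[c,d]=[\kappa_n,\nu_n]\subset[\kappa_{n-1},\nu_{n-1}]$ that works simultaneously (one application suffices because the $k$ and the interval produced by the lemma depend only on $[a,b]$ and $k_0$, not on the particular word $I$; the conclusions (1)--(3) of the lemma are uniform in $I\in\{2\}\times\{\pm2\}^k$). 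Then for each $I\in\{\pm1\}^{n-1}$ define $\Phi(I^*{\pm1}) = \Phi(I)^*W^*{\pm1}$ where $W\in\{\pm2\}^{k_n-k_{n-1}-1}$ is chosen so that the total length of $\Phi(I^*{\pm1})$ is $k_n$ — this gives (E) and (F) — and choose the labelling of $\pm1$ so that $M^\alpha_{\Phi(I^*-1)}\leq M^\alpha_{\Phi(I^*1)}$ throughout $[\kappa_n,\nu_n]$.

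With these definitions, properties (A), (B), (E), (F) are immediate from the construction; (C), (D) and (I) are precisely conclusions (1), (2) and (3) of Lemma~\ref{intervals} applied to the word $\Phi(I)$ (reading $|I|=k_n$ in the lemma's notation since $\Phi(I^*{\pm1})$ has length $k_n$ and is obtained by extending $\Phi(I)$ by $\pm2$'s then $\pm1$, so the relevant ``$I$'' of the lemma is $\Phi(I)^*W$). Property (H) follows by combining: $M^\alpha_{\Phi(I^*{\pm1})}-M^\alpha_{\Phi(I)}$ is a difference of lowest-points of two copies of $K^\alpha$ at successive scales, and the translation vectors of the $\pm2$ maps are vertical with length $q^{j+1}$ at level $j$; summing a geometric series and using $k_n>2k_{n-1}+1$ bounds the total vertical displacement by $11q^{k_n+1}$ (and nonnegativity holds because the $-2$ copy sits above, as in the definition of $\phi_{-2}$). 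Property (G) is the inductive bookkeeping: the $-1$-branch was chosen to minimize $M^\alpha$ among $\Phi(I)^*W^*{\pm1}$, and for a general $J\in\{2\}\times\{\pm1\}^{k_n}$ with $\Phi(I)\triangleleft J$ one uses that every such $J$ extends $\Phi(I)$ by a mixture of $\pm1$ and $\pm2$ symbols and that each symbol can only raise the lowest point, except possibly the last $\pm1$ symbol whose downward effect is already accounted for by the choice of branch; this is where one invokes Lemma~\ref{sqrt} / Corollary~\ref{cr}-type estimates to control the sign.

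The main obstacle is property (G): one has to verify that the branch selected to have the smallest $M$-value at the single resolved coordinate genuinely stays minimal against \emph{all} deeper descendants $J$ with $\Phi(I)\triangleleft J$, not merely against its sibling. The clean way is to observe that passing from $\Phi(I)$ to any child increases $M^\alpha$ by a nonnegative amount up to an error of order $q^{k_n+2}$ (the horizontal wobble of the $\pm1$ maps contributes at second order, bounded via Lemma~\ref{sqrt}), while the gap $|M^\alpha_{\Phi(I^*1)}-M^\alpha_{\Phi(I^*-1)}|\geq q^{k_n+1}$ from (D) dominates that error by a full factor of $q^{-1}=1000$; hence the $-1$-branch remains strictly lowest with room to spare, and this margin is what makes the induction close.
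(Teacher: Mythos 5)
Your skeleton (induction on $n$, one application of Lemma~\ref{intervals} per step, labelling the lower sibling $-1$) is the same as the paper's, but your treatment of property (G) does not work, and (G) is the load-bearing item: it is what forces $M^{\alpha}_{\Phi(I^*-1)}=M^{\alpha}_{\Phi(I)}$ (needed for the lower bound in (H)) and what feeds Corollary~\ref{cr} in the proof of the final theorem. You extend $\Phi(I)$ by an essentially arbitrary intermediate block $W$ and then argue that any deeper descendant raises the lowest point only by an error of order $q^{k_n+2}$, dominated by the sibling gap from (D). That premise is false. Two length-$k_n$ descendants of $\Phi(I)$ that already differ in a coordinate at an intermediate level $j$ with $k_{n-1}<j<k_n$ are translates of one another by a vector of magnitude about $q^{j+2}$ in direction $(\cos(j\alpha),\sin(j\alpha))$, and nothing in the construction makes $\sin(j\alpha)$ small at these intermediate levels --- only $k_n\alpha$ is pinned near a multiple of $2\pi$ by Lemma~\ref{intervals}. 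Hence the spread of $M^{\alpha}_{J}$ over the level-$k_n$ descendants $J$ of $\Phi(I)$ is generically of order $q^{k_{n-1}+3}$, which, since $k_n>2k_{n-1}+1$, is enormously larger than the level-$k_n$ sibling gap; the domination goes the wrong way. An arbitrary $W$ therefore will in general not lead to the descendant containing the lowest point of $K^{\alpha}_{\Phi(I)}$, and \eqref{mineq2} fails. The paper's proof avoids any such estimate: it \emph{chooses} $J\in\{2\}\times\{\pm1\}^{k_l-1}$ with $\Phi(I)\triangleleft J$ to be precisely the word whose $-1$-child realizes $\min_{J'} M^{\alpha}_{J'}$ over the finitely many candidates, and shrinks $[\kappa_l,\nu_l]$ so that the same word is the minimizer for every $\alpha$ in the subinterval. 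Minimality is then true by fiat, and (H) follows from $M^{\alpha}_{\Phi(I^*-1)}=M^{\alpha}_{\Phi(I)}$ together with (D).

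A second, independent problem is your choice $W\in\{\pm2\}^{k_n-k_{n-1}-1}$. This contradicts property (E), which requires $\Phi(I^*\pm1)\in\{2\}\times\{\pm1\}^{k_n}$, i.e.\ only $\pm1$ symbols after the leading $2$; and it wrecks the scaling on which Lemma~\ref{intervals} rests, because $\phi_{\pm2}$ contracts by $q^{2}$ and does not rotate, so a word containing $\pm2$'s has neither similarity ratio $q^{|I|}$ nor accumulated rotation $|I|\alpha$, and the conclusions (1)--(3) of the lemma (hence (C), (D), (I) with the stated powers of $q$) no longer apply to it. The set $\{2\}\times\{\pm2\}^{k}$ in the statement of Lemma~\ref{intervals} is a typo --- its proof works with $J\in\{\pm1\}^{k}$ --- and the proposition's construction extends $\Phi(I)$ by $\pm1$ symbols only.
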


\begin{proof}
We will proceed by induction by $n.$

For $n=1$ we use Lemma~\ref{intervals} for $k_{0}=0$ and $[a,b]=[0,2\pi]$ to obtain
$[\kappa_{1},\nu_{1}]$ and $k_{l}$ such that $(a)$, $(b)$ and $(c)$ hold with $k=k_{1}$ and $[c,d]=[\tilde\kappa_{1},\tilde\nu_{1}].$
Put $\Phi(\emptyset)=\emptyset$ and choose $\Phi(-1)\in\{2\}\times\{\pm1\}^{k_{1}}$ and 
$[\kappa_{1},\nu_{1}]\subset[\tilde\kappa_{1},\tilde\nu_{1}]$ in such a way that (\ref{mineq2}) holds and so we have property $(G)$.
Properties $(C)$, $(D)$ and $(I)$ follows from $(a)$, $(b)$ and $(c)$ in Lemma~\ref{intervals}, respectively.

{\it Induction step.} Choose $l\in \en$ and suppose that conditions $(A)$-$(I)$ hold for every $n<l.$
First use Lemma~\ref{intervals} to find $[\tilde\kappa_{l},\tilde\nu_{l}]\subset[\kappa_{l-1},\nu_{l-1}]$ and $k_{l}>2k_{l-1}+1$ 
such that $(a)$, $(b)$ and $(c)$ hold with $k=k_{l}$ and $[c,d]=[\tilde\kappa_{l},\tilde\nu_{l}].$
Now, find an interval $[\kappa_{l},\nu_{l}]$ and $J\in\{2\}\times\{\pm1\}^{k_{l}-1}$ with $\Phi(I)\triangleleft J$ 
such that (\ref{mineq2}) holds for every $\alpha\in[\kappa_{l},\nu_{l}]$ after we put $\Phi(I^*-1)=J^*-1.$
Put $\Phi(I^*1)=J^*1$.
Validity of all conditions $(A),(B)$, $(E)$-$(G)$ and the second inequality in condition $(H)$ follows directly from the above construction
and conditions $(C)$ and $(D)$ follow from $(a)$ and $(b)$ in Lemma~\ref{intervals}, respectively.
To prove the first inequality in $(H)$ it is sufficient to use the fact that $M_{\Phi(I^*-1)}^{\alpha}=M_{\Phi(I)}^{\alpha}$ and condition $(D).$
Condition $(I)$ holds due to the property $(c)$ in Lemma~\ref{intervals} and since in fact for every $R\in\R_{\Phi(I)}$ we have $R\in R_{\Phi(I^*-1)}.$
\end{proof}

\begin{theorem}
There is $\alpha\in [0,2\pi]$ such that $K^{\alpha}$ has uncountably many critical values.
\end{theorem}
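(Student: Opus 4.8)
The plan is to use the infrastructure built in Proposition~\ref{key} to extract a single angle $\alpha$ and then exhibit uncountably many critical values of $K^\alpha$. First I would set $\alpha \in \bigcap_n [\kappa_n,\nu_n]$, which is nonempty by property $(B)$ and compactness. For each infinite word $I \in \{\pm1\}^\omega$, properties $(F)$ and $(E)$ give a nested sequence $\Phi(I|_1) \triangleleft \Phi(I|_2) \triangleleft \cdots$ of finite words in $\{2\}\times\{\pm1\}^{<\omega}$ whose lengths $k_n \to \infty$ (by $(A)$), so the sets $K^\alpha_{\Phi(I|_n)}$ are nested and their diameters shrink to zero; let $x_I$ be their common point. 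I would then define the candidate critical value associated to $I$ as $t_I = \lim_n M^\alpha_{\Phi(I|_n^*-1)}$ — or work directly with the radii of balls in $\R^\alpha_{\Phi(I|_n)}$, which by $(I)$ form a Cauchy sequence in the Hausdorff metric and hence converge to a single ball $B_I = B((z_I,0), t_I)$ touching $x_I$.

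Next I would verify that $B_I$ witnesses that $t_I$ is a critical value of $K^\alpha$. Criticality has two parts. For the ``empty interior'' part, $B_I^\circ \cap K^\alpha = \emptyset$: each ball in $\R^\alpha_{\Phi(I|_n)}$ has empty interior intersection with $K^\alpha_{\Phi(I|_n)} \cup K^\alpha_{-2^*(\cdots)}$ by definition of $\R$, and property $(C)$ together with Corollary~\ref{cr} controls the distance from these balls to all the other level-$k_n$ copies, so the limiting ball stays disjoint (in the open sense) from all of $K^\alpha$; one passes to the limit using that $K^\alpha$ is the union of the copies at each level and the uniform gap estimates. For the ``center in the convex hull of contact points'' part, the ball $B_I$ touches $K^\alpha_{\Phi(I|_n^*1)}$ and $K^\alpha_{\Phi(I|_n^*-1)}$ on opposite sides (this is the point of having two child copies translated nearly horizontally, controlled by $(D)$ and $(H)$), so its center lies in the convex hull of the two contact points in the limit. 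Hence each $t_I$ is a genuine critical value.

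Finally I would show the map $I \mapsto t_I$ has uncountable image, which is where the real content lies. The key separation estimate is $(D)$: for distinct $I, J \in \{\pm1\}^\omega$, let $n$ be the first level where they differ, so $I|_{n-1} = J|_{n-1}$ but $I(n) \neq J(n)$; then (after possibly using $(G)$ and $(H)$ to align the $M$-values of $\Phi(I|_{n-1})$ and its two children) the values $M^\alpha_{\Phi(I|_n)}$ and $M^\alpha_{\Phi(J|_n)}$ differ by at least $q^{k_n+1}$, while all subsequent corrections $M^\alpha_{\Phi(I|_m^*-1)} - M^\alpha_{\Phi(I|_m)}$ for $m > n$ are bounded by $11 q^{k_m + 1}$ with $k_m \geq k_n + 1$ growing geometrically (using $(A)$: $k_{m} > 2k_{m-1}+1$). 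Summing the tail gives a total perturbation far smaller than $q^{k_n+1}$, so $|t_I - t_J| > 0$; the map is injective, and $\{\pm1\}^\omega$ is uncountable, so $K^\alpha$ has uncountably many critical values.

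The main obstacle I anticipate is the bookkeeping in the last paragraph: one must be careful that the quantity $t_I$ really is determined by the sequence of signs and that the geometric decay of $k_n$ genuinely dominates the accumulated errors coming from properties $(D)$, $(G)$ and $(H)$ simultaneously — in particular one needs $\sum_{m>n} 11 q^{k_m+1} < q^{k_n+1}$, which follows from $(A)$ but requires checking the constants. A secondary subtlety is making the limiting-ball argument for the first criticality condition fully rigorous: one must ensure the distance lower bounds of $(C)$/Corollary~\ref{cr}, which are stated at each finite level, survive passage to the limit against \emph{all} of $K^\alpha$ and not merely against the finitely many copies at that level; this is handled by noting that any point of $K^\alpha$ lies in some copy $K^\alpha_{\Phi(I|_n^*w)}$ for $w$ a suitable finite word, and applying the level-$k_n$ estimates.
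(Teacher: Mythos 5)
Your overall route is the same as the paper's: fix $\alpha\in\bigcap_n[\kappa_n,\nu_n]$ using (B), assign to each $J\in\{\pm1\}^\omega$ the limit value $M_J=\lim_n M^{\alpha}_{\Phi(J|_n)}$ (equivalently the radius of the limit of the balls in $\R^{\alpha}_{\Phi(J|_n)}$), prove injectivity of $J\mapsto M_J$ from (D) together with the tail estimates coming from (H) and (A), and prove the empty-interior condition by writing an arbitrary $z\in K^{\alpha}_{2}\setminus\{u_J\}$ as $\bigcap_n K^{\alpha}_{G|_n}$, locating the first level where $G$ departs from the address of $u_J$, and invoking (C), (G), Corollary~\ref{cr} and the Hausdorff convergence coming from (I) and (A). All of that matches the paper's proof and is sound.

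There is, however, one step that is genuinely wrong as you state it: the verification that the center of $B_I$ lies in the convex hull of its nearest points in $K^{\alpha}$. You claim that $B_I$ ``touches $K^{\alpha}_{\Phi(I|_n^*1)}$ and $K^{\alpha}_{\Phi(I|_n^*-1)}$ on opposite sides.'' It does not: property (C) of Proposition~\ref{key} (i.e.\ (a) of Lemma~\ref{intervals}) says precisely that the ball attached to one child stays at distance at least $q^{2k_n+2}$ from the sibling child, and in the limit $B_I$ meets $K^{\alpha}_{2}$ in the single point $u_J$. Moreover, even if the ball did touch both children, both contact points would sit at height roughly $q>0$ above the $x$-axis while the center lies on the axis, so the center could not belong to their convex hull. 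The correct mechanism --- and the entire reason the construction adds the extra maps $\phi_{\pm2}$ --- is the mirror symmetry of $K^{\alpha}_{2}$ and $K^{\alpha}_{-2}$ about the $x$-axis: the balls in $\R^{\alpha}_{\Phi(J|_n)}=\R\bigl(K^{\alpha}_{\Phi(J|_n)},K^{\alpha}_{-2^*\cdots}\bigr)$ touch one point above the axis and its reflection below it, so the limit ball $B((u_1,0),u_2)$ meets $K^{\alpha}$ exactly in $\{(u_1,u_2),(u_1,-u_2)\}$ and its center is the midpoint of these two nearest points (the copies $K^{\alpha}_{\pm1}$ being at distance at least $\tfrac13$ from the center, hence irrelevant). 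With this symmetry argument inserted your proof closes; without it the criticality of $M_J$ is not established.
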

\begin{proof}
Let $\Phi$ and $[\kappa_{n},\nu_{n}]$ be as in Proposition~\ref{key}.
Due to the property $(B)$ we can choose $\alpha\in\cap_{n\in\en}[\kappa_{n},\nu_{n}].$
We will prove that $K^{\alpha}$ has uncountably many critical values.
For $J\in\{-1,1\}^{\omega}$ put $M_{J}=\lim_{n\to\infty}M^{\alpha}_{\Phi(J|_{n})}.$
The limit exists, because $M^{\alpha}_{\Phi(J|_{n})}$ is a monotone sequence due to property $(H)$ (and obviously bounded).
We will prove that $M=\{M_{j}:J\in\{-1,1\}^{\omega}\}$ is uncountable.

First we prove that if $J\not=\tilde J$ then $M_{J}\not=M_{\tilde J}.$
Choose such $J$ and $\tilde J$ and let $n$ be the lowest number satisfying $J(n)\not=\tilde J(n).$
We can suppose that $-1=J(n)=-\tilde J(n).$
By properties $(A)$ and $(D)$ we have 
$$
\begin{aligned}
|M_{J}-M_{\tilde J}|\geq& |M^{\alpha}_{\Phi(J|_{n})}-M^{\alpha}_{\Phi(\tilde J|_n)}|
-\sum _{k=n}^{\infty} |M^{\alpha}_{\Phi(J|_{k})}-M^{\alpha}_{\Phi(J|_{k+1})}|\\
&-\sum _{k=n}^{\infty} |M^{\alpha}_{\Phi(\tilde J|_{k})}-M^{\alpha}_{\Phi(\tilde J|_{k+1})}|
\geq q^{2k_{n}+1}-22\sum _{m=2k_{n}+2}q^{m} >0.
\end{aligned}
$$

To finish the proof of the theorem, it is sufficient to prove that every $M_{J}$ is a critical value of $K^{\alpha}.$ 
Define 
$$
(u_{1},u_{2})=u_{J}=\bigcap_{n=0}^{\infty} K^{\alpha}_{\Phi(J|_{n})}
$$
(the above intersection is nonempty by property $(F)$) and put $S:=B((u_{1},0),u_{2}).$
Note that by property $(E)$ we have $u_{J}\in K_{2}^{\alpha}.$
Since $\dist((u_{1},0),K_{1}^{\alpha}\cup K_{-1}^{\alpha})\geq\frac{1}{3}$ and $u_{2}\leq q$ and using the fact that $K_{2}^{\alpha}$ 
and $K_{-2}^{\alpha}$ are symmetrical with respect to the $x$-axis we are done if we can prove $S\cap K_{2}^{\alpha}=\{u_{J}\}.$
This way we obtain $S\cap K=\{(u_{1},u_{2}),(u_{1},-u_{2})\}$ and it suffices to use obvious fact that $(u_{1},0)\in\co\{(u_{1},u_{2}),(u_{1},-u_{2})\}.$

So choose $z\in K_{2}^{\alpha}\setminus\{u_{J}\}$.
Then $z=\cap _{n}K^{\alpha}_{G|n}$ for some $G\in\{2\}\times\{-1,1\}^{\omega}.$
Due to properties $(E)$ and $(F)$ we can find $V\in\{2\}\times\{\pm1,\pm2\}^{\omega}$ such that $V|_{k_{n}+1}=\Phi(J|_n)$ for every $n\in\en.$
Find first $k$ such that $G(k)\not=V(k).$
Then there is an $m$ with the property that $k_{m}\geq k-1> k_{m-1}.$
Choose $R_{n}\in \R^{\alpha}_{\Phi(J|_{m+n})}$ arbitrary for every $n\in\en_0$.
By property $(I)$ we have for $n\in\en_0$
$$
R_{n+1}\subset R_{n}+B(0,2q^{k_{m+n}+1}).
$$
Since $R_{n}\to S$ in the Hausdorff metric we can write
$$
\begin{aligned}
S&\subset R_{0}+B(0,2\sum_{n=1}^{\infty}q^{k_{m+n}+1})\\
&\subset R_{0}+B(0,2\sum_{i=2k_{m}+3}^{\infty}q^{i})\\
&\subset R_{0}+B(0,3q^{2k_{m}+3}),
\end{aligned}
$$
where the second inclusion holds due to property $(A)$.
On the other hand, if $k-1=k_{m}$ then by property $(C)$ we have $\dist(z,R_{0})\geq q^{2k_{m}+2}>3q^{2k_{m}+3}$ and if $k-1<k_{m}$ we have by property $(G)$ and Corollary~\ref{cr} that $\dist(z,R_{0})\geq \frac{q^{2k_{m}+1}}{7}>3q^{2k_{m}+3}$.
Therefore $z\not\in S.$
\end{proof}

\textbf{Acknowledgement} I would like to thank Jan Rataj for many helpful discussions on the topic and especially for the idea of Proposition~\ref{prop}.

\end{document}